\def\ds@whichfont{dsrom}
\DeclareMathAlphabet{\mathds}{U}{\ds@whichfont}{m}{n}
\newtheorem{theorem}{Theorem}[section]
\newtheorem{lemma}[theorem]{Lemma}
\newtheorem{corollary}[theorem]{Corollary}
\newtheorem{proposition}[theorem]{Proposition}
\theoremstyle{definition}
\newtheorem{assumption}[theorem]{Assumption}
\newtheorem{remark}[theorem]{Remark}
\newtheorem{example}[theorem]{Example}
\numberwithin{equation}{section}
\theoremstyle{plain}
\numberwithin{equation}{section} 
\numberwithin{figure}{section} 
\theoremstyle{plain}
\theoremstyle{plain}
\theoremstyle{remark}
\newtheorem*{acknowledgement*}{Acknowledgement}
\newcommand{\cD}{{\mathcal D}}
\newcommand{\cE}{{\mathcal E}}
\newcommand{\cF}{{\mathcal F}}
\newcommand{\cG}{{\mathcal G}}
\newcommand{\cH}{{\mathcal H}}
\newcommand{\cL}{{\mathcal L}}
\newcommand{\cM}{{\mathcal M}}
\newcommand{\cS}{{\mathcal S}}
\newcommand{\cW}{{\mathcal W}}
\newcommand{\cX}{{\mathcal X}}
\newcommand{\cY}{{\mathcal Y}}
\newcommand{\te}{{\theta}}
\newcommand{\Om}{{\Omega}}
\newcommand{\om}{{\omega}}
\newcommand{\ve}{{\varepsilon}}
\newcommand{\del}{{\delta}}
\newcommand{\Del}{{\Delta}}
\newcommand{\gam}{{\gamma}}
\newcommand{\sig}{{\sigma}}
\newcommand{\al}{{\alpha}}
\newcommand{\be}{{\beta}}
\newcommand{\bbC}{{\mathbb C}}
\newcommand{\bbE}{{\mathbb E}}
\newcommand{\bbN}{{\mathbb N}}
\newcommand{\bbR}{{\mathbb R}}
\newcommand{\bbI}{{\mathbb I}}
\newcommand{\bbW}{{\mathbb W}}
\newcommand{\bbS}{{\mathbb S}}
\begin{document}
\title[]{On Eagleson's theorem in the non-stationary setup}
 \author{Yeor Hafouta \\
\vskip 0.1cm
Department  of Mathematics\\
The Ohio State University}
\email{yeor.hafouta@mail.huji.ac.il, hafuta.1@osu.edu}

\maketitle
\markboth{Y. Hafouta}{Eagleson's theorem}
\renewcommand{\theequation}{\arabic{section}.\arabic{equation}}
\pagenumbering{arabic}

\begin{abstract} 
A classical result due to Eagleson states (in particular) that if appropriately normalized Birkhoff sums generated by a measurable function and an ergodic probability preserving transformation converge in distribution, then they also converge in distribution with respect to any probability measure which is 
absolutely continuous with respect to the invariant one. In this  note we prove  several quantitative and infinite-dimensional versions of Eagleson's theorem for some classes of non-stationary stochastic processes which satisfy certain type of decay of correlations.
\end{abstract}

\section{introduction}
Let $(\Om,\cF,\mu,T)$ be an probability preserving system (p.p.s.) and let $f:\Om\to\bbR^d$ be a measurable function. Then the partial sums $S_nf=S_nf(x)=\sum_{n=0}^{n-1}f(T^nx)$ are random variables, where $T^n=T\circ T\cdots\circ T$  and $x$ is chosen at random according to $\mu$ (i.e. the probability that $x$ belongs to a measurable set $A$ is $\mu(A)$). Note that any  discrete time vector-valued stationary process $Y_0,Y_1,...$ has the form $Y_n=f(T^nx)$ for some p.p.s and a measurable function $f$. Given such a function $f$, an important question in probablity and ergodic theory is whether  $(S_nf-a_n) f/b_n$ converges
in distribution as $n\to\infty$, for some sequences $(a_n)$ and $(b_n)$ so that $\lim_{n\to\infty}b_n=\infty$. A related and more general question is whether the continuous time processes $W_n(t)=(S_{[nt]}f-a_{[nt]})/b_n$ converge in distribution.

In certain circumstances there is a given reference measure (e.g. the Lebesgue measure) which is absolutely continuous with respect to the invariant measure $\mu$. 
A classical result due to Eagleson \cite{Eag} insures (for $d=1$) that the weak convergence of $(S_n-a_n)/b_n$ with respect to $\mu$ is equivalent to the weak convergence with respect to the reference measure (with the same limits), which is  more natural since usually the density of $\mu$ does not have an explicit form. Since then this result was extended to more general processes which are not necessarily real-valued, see for instance \cite{ZW}.
In particular, one can also consider  vector-valued functions $f$, as well as random continuous time processes of the form $W_n(t)=(S_{[nt]}f-a_{[nt]})/b_n$, which gives us a version of Eagleson's theorem in the context of the the weak invariance principle (WIP), which states that the stochastic process $\cW_n(t)=n^{-1/2}S_{[nt]}(f-\mu(f))$ converge in distribution as $n\to\infty$ in the Skorokhod space $D([0,\infty),\bbR^d)$ towards a Gassian process (other normaliztions $b_n$ can be considered). 
 Another application of \cite{ZW} is to the, so called, iterated WIP, which yields a certain type of smooth approximations of stochastic differential equations for suspension flows built over non-uniformly expanding or hyperbolic maps \cite[Theorems 2.1, 2.2]{KM}.
 We also refer to \cite{Go} for additional results which also have applications to the strong invariance principle. 

The results described in the latter paragraph concern partial sums $S_n=\sum_{j=0}^{n-1}Y_n$ generated by vector-valued stationary process $\{Y_n\}$ defined on a probability space $(\Om,\cF,\mu)$,
and in this paper we prove certain versions of Eagleson's theorem for non-stationary sequences of vector-valued processes $\{Y_n\}$ satisfying certain mixing (decay of correlations) conditions which hold true for many sequential dynamical systems including the ones arising as realizations of random dynamical systems, as well as for wide classes of inhomogeneous Markov chains and other mixing sequences. Note that for real-valued $Y_n$'s 
Eagleson's results \cite{Eag} also apply when the tail-$\sigma$ algebra of $\{Y_n\}$ is trivial, which will be the case in most of the examples we have in mind. 
We start with the above vector-valued case, but when $Y_n$'s are real-valued we also prove a quantitative version,  which means that we obtain explicit estimates on the convergence rate in the weak convergence with respect to a measure $m$ which is absolutely continuous with respect to $\mu$, in terms of the rate  in the corresponding convergence with respect to the original measure $\mu$ (for which the mixing conditions originally hold). The question of optimal convergence rate will also be addressed, as well as the problem of re-centering and re-normalizing after changing the measure. 
We also prove a non-stationary  version of Eagleson's theorem for continuous time stochastic processes of the form $\cS_n(t)=S_{[nt]}f/b_n$ (i.e. a version for the WIP).
Our results, for instance yield the WIP for the compositions of random Anosov or expanding maps considered in \cite{DFGTV0} and \cite{DFGTV2}, with respect to the Lebesgure measure and not only with respect to the random equivariant measures (the WIP for such maps follows from the almost sure invariance principles which were obtained in \cite{DFGTV0} and \cite{DH}).
Finally, we will also discuss a version of Eagleson's theorem in the, so called, iterated WIP, which we expect to  have applications in smooth approximations of stochastic differential equations for non-stationary suspension flows built over random and sequential dynamical systems (i.e. in a non-stationary version of \cite{KM}).

\section{Preliminaries and examples}\label{Sec1}
Let $(\cE,\cF,\mu)$ be a probability space,  $\cX_0,\cX_1,\cX_2,...$ be measurable spaces and  $X_0,X_1,X_2,...$ be a sequence of measurable functions on $\cE$, so that $X_i$ takes values in $\cX_i$ for each $i$. In this paper we are interested in sequences so that the  partial sums $S_n g=\sum_{j=0}^{n-1}g_j(X_j,X_{j+1},...)$ satisfy the central limit theorem for large classes of sequences of functions $\{g_j\}$, namely there are sequences $(a_n)$ and $(b_n)$ which depend on $\{g_j\}$ so that $(b_n)$ tends to $\infty$ and $(S_n g-a_n)/b_n$ converges in distribution towards the standard normal law.
In general, for the CLT to hold true for a large class of sequences $\{g_j\}$ a certain type of asymptotic independence between $\{X_0,...,X_n\}$ and $\{X_{n+k},X_{n+k+1},....\}$ as $k\to\infty$ is required.
As mentioned in the abstract, our standing assumption is a certain type of decay of correlations, which is a quantitative way of  measuring such dependence.
\begin{assumption}\label{Ass1}
There exists a sequence $(\del_n)$ which converges to $0$ as $n\to\infty$ and a set $B$ of real integrable functions on $\cE$ equipped with a ``norm" $\|\cdot\|$ so that for all $n$, a function $s\in B$ and a bounded complex-valued function $f=f(x_{n},x_{n+1},x_{n+2},...)$ we have
\begin{equation}\label{Dec}
\left|\int s(x)f(\overline{X}_n(x))d\mu(x)-\int s(x)d\mu(x)\cdot \int f(\overline{X}_n(x))d\mu(x)\right|\leq \|s\|\|f\circ \overline{X}_n\|_\infty\del_n
\end{equation}  
where $\overline{X}_n(x)=(X_{n}(x),X_{n+1}(x),X_{n+2}(x),...)$ and $\|f\circ \overline{X}_n\|_\infty$ is the essential supremum of the function $f(\overline{X}_n(x))$ with respect to $\mu$.
\end{assumption}
This assumption holds true in a variety of models, which will be described in Examples \ref{Eg1} and \ref{Eg2}.
Let us now explain why we only need $\|f\circ \overline{X}_n\|_\infty$ to appear on the right hand side of \eqref{Dec} (and not a smaller norm). Let $g_j, j\geq 0$ be a sequence of functions on $\cX_{j}\times\cX_{j+1}\times...$ and set $S_ng(x)=\sum_{j=0}^{n-1}g_j(\overline{X}_j(x))$. Then the goal in this paper is to investigate the limit (distributional) behavior of  $S_n g$ (and related infinite dimensional processes) when $x$ is distribution according to measures $\nu$ which are absolutely continuous with respect to $\mu$ and $r=d\nu/d\mu$ belongs to the $L^p$-closure of $B$ for some $p\geq1$.
The idea behind the the proofs  is that for a density $r\in B$ and any real $t$, integers $0\leq k<n$ and a normalizing sequence $(b_n)$ so that $\lim_{n\to\infty} b_{n}=\infty$ we have
\[
\int r(x) e^{itS_ng(x)/b_n}d\mu(x)=\int r(x) e^{it(S_ng(x)-S_kg(x))/b_n}d\mu(x)+O(t/b_n)A_k
\]
where   $A_k=\int r(x)|S_k g(x)|d\mu(x)$.
 Now, since $e^{it(S_ng(x)-S_kg(x))/b_n}$ is a bounded function of  $\overline{X}_k$, using \eqref{Dec} we get that 
$$
\int r(x) e^{itS_ng(x)/b_n}d\mu(x)=\int e^{itS_ng(x)/b_n}d\mu(x)+O(t/b_n)B_k+O(\delta_k)
$$
where $B_k=\int (r(x)+1)|S_k g(x)|d\mu(x)$.
By choosing $k=k_n$ appropriately so that $\lim_{n\to\infty}k_n\to\infty$ (and other restrictions hold, depending on the result we want to prove) we see that the characteristic function of $S_n g/b_n$ with respect to $\nu=rd\mu$ can be controlled by the corresponding one with respect to $\mu$ on appropriate domains.

Before formulating our main results let us discuss two main types of examples which satisfy Assumption \ref{Ass1}.
\begin{example}[Random and sequential dynamical systems]\label{Eg1}
Let $\cX_0=\cE$ and $T_0,T_1,T_2,...$ be a sequence of maps so that $T_j:\cX_j\to\cX_{j+1}$. Let $X_0(x)=x$ and  set $X_j(x)=X_0(T_0^jx)=T_0^jx$, where $T_n^m=T_{n+m-1}\circ\cdots\circ T_{n+1}\circ T_{n}$ for all $n$ and $m$. Then $X_{j+n}=X_j\circ T_j^n$ for every $n$ and $j$. Therefore, $\overline{X}_n(x)$ depends only on $X_n(x)$ and (\ref{Dec}) becomes  
\begin{equation*}
\left|\int s(x)f(T^nx)d\mu(x)-\int s(x)d\mu(x)\cdot \int f(T^n x)d\mu(x)\right|\leq \|s\|\|f\circ T^n\|_{L^\infty(\mu)}\del_n.
\end{equation*}
This condition (with an appropriate $\mu$) is satisfied for appropriate $B$'s and norms $\|\cdot\|$ for many sequential and random dynamical systems, where in many of the examples we can even replace $\|f\circ T^n\|_{L^\infty(\mu)}$ with the corresponding $L^1(\mu)$-norm. We refer the readers to
 \cite{ABR}, \cite{B1}, \cite{Buzzi}, \cite{DFGTV1}, \cite{DFGTV2}, \cite{HafSDS}, \cite{HK}
and \cite{Kif LimThms}, \cite{MSU} and references therein. We note that in some of these papers the case when  $T_j=T_{\te^j\om}$ is a random stationary family of maps is considered, where $(\Om,\cF,P,\te)$ is a measure preserving system, and  $T_\om,\,\om\in\Om$ is a measurable in $\om$ family of maps. We note that in most of the above papers $B$ is a normed space which is dense in $L^p(\mu)$ for every finite $p\geq1$. 

Remark that in \cite{ABR} the authors obtained almost sure rates of mixing for certain classes random hyperbolic maps $T_\om$. The authors of \cite{ABR} show that these  maps admit a random tower extension $(\Del_\om, F_\om)$, first introduced in \cite{BBM} (which generalizes \cite{Y2} to the random case). The random tower inherits the random hyperbolic structure from the original maps $T_\om$, and after collapsing stable manifolds, the statistical properties of the original maps (with respect to the random physical measure) are reduces to the resulting ``projected" random tower $(\bar \Del_\om, \bar F_\om)$, see \cite[Section 2.3]{ABR}. A direct application of  \cite[Theorem 2.5]{ABR} shows that
Assumption \ref{Ass1} holds true (for $P$-a.a. $\om$) on the projected tower with $B=B_\om$ being  space of H\"older continuous functions on $\Del_\om$ and $\mu=\mu_\om$, where $\mu_\om$ is the absolutely continuous equivariant measure (i.e. $(\bar F_\om)_*\mu_\om=\mu_{\te \om}$).
\end{example}

\begin{example}[Non-stationary mixing stochastic processes]\label{Eg2}
Let $X=\{X_j\}$ be a sequence of random variables defined on the same probability space $(\cE,\cF,\mu)$. 
For each $n\leq m$ we denote by $\cF_{n,m}$ the $\sig$-algebra generated by the random variables $X_{n},X_{n+1},...,X_{n+m}$.  Let $\cF_{n,\infty}$ denote the $\sig$-algebra generated by the random variables $X_{j},\,j\geq n$. For any two sub-$\sig$-algebras $\cG$ and $\cH$ of $\cF$, set
\[
\al(\cG,\cH)=\sup\left\{|P(A\cap B)-P(A)P(B)|:\,A\in\cG,\,B\in\cH\right\}.
\]
The coefficient $\al(\cG,\cH)$ measures the dependence between $\cG$ and $\cH$ and it is one of the classical mixing coefficients used in the literature (often referred to as the strong mixing coefficient). For each $n\geq1$ set 
\[
\al_n=\al_n(X)=\sup_{k\geq0}\al(\cF_{0,k},\cF_{k+n,\infty}).
\]
The sequence $\{X_n\}$ is called $\al$-mixing if $\lim_{n\to\infty}\al_n=0$.
A concrete example for non-stationary $\al$-mixing processes
are the inhomogeneous Markov chains considered in \cite{VarSeth}. See \cite{DS}, \cite{Douk1}, \cite{Douk2}, \cite{Tru1} and \cite{Tru2} for other examples of $\al$-mixing non-stationary processes.

Let $(\gam_n)$ be a sequence which converges to $0$ as $n\to\infty$ and let $B$ be the set of all functions $s$ on $\cE$ so that for every sufficiently large $n$,
\[
\beta_{1,n}(s):=\|s-\bbE[s|X_0,...,X_{n}]\|_{L^1(\mu)}\leq \gam_n.
\]  
It is clear that $B$ contains all the random variables of the form $s=s(X_0,...,X_n)$.
Let $p\geq 1$ be  finite and $\|\cdot\|$ be the $L^p(\mu)$-norm. Then for all $s\in B$ so that $\int sd\mu=1$ we have  
\begin{eqnarray*}
\left|\int s(x)f(\overline{X}_n(x))d\mu(x)-\int s(x)d\mu(x)\cdot \int f(\overline{X}_n(x))d\mu(x)\right|\\\leq \left|\int s_{[\frac n2]}(x)f(\overline{X}_n(x))d\mu(x)-\int s_{[\frac n2]}(x)d\mu(x)\cdot \int f(\overline{X}_n(x))d\mu(x)\right|\\+2\|f\circ\overline{X}_n\|_{L^\infty}\|s-s_{[\frac{n}2]}\|_{L^1}
\end{eqnarray*}
where $s_n=\bbE[s|X_0,...,X_{n}]$. Since $\int sd\mu=1$ we have
\[
\|s-s_{\frac{n}2}\|_{L^1}\leq \gam_{[\frac n2]}\leq \|s\|_{L^p}\gam_{[\frac n2]}.
\]
Next, by Corollaries A.1 and A.2 in \cite{HallHyde} and since conditional expectations contract $L^p$-norms, for every $p\geq 1$ we have
\begin{eqnarray*}
\left|\int s_{[\frac n2]}(x)f(\overline{X}_n(x))d\mu(x)-\int s_{[\frac n2]}(x)d\mu(x)\cdot \int f(\overline{X}_n(x))d\mu(x)\right|\\\leq 6\left(\al(\cF_{0,[\frac n2]},\cF_{n,\infty})\right)^{1-\frac 1p}\|s\|_{L^p}\|f(\overline{X}_n)\|_{L^\infty}
\end{eqnarray*}
where we use the convention $\frac 1{\infty}:=0$. We conclude that
in the above circumstances the conditions in Assumption \ref{Ass1} hold true with 
 $\|s\|=\|s\|_{L^p}$ and 
 $\del_n=6\big(\al_{[\frac n2]}\big)^{1-\frac 1p}+2\gam_{[\frac n2]}$.
\end{example}

\section{Vector-valued processes}
Henceforth, when it is more convenient we will denote the integral of a function $f$ with respect to $\mu$ by $\mu(f)$. We will also denote by $\mu_j$ the distribution of $X_j$. For each $n$ set $\cY_n=\cX_n\times\cX_{n+1}\times...$.
Let $d\geq1$, $g_j:\cY_j \to\bbR^d$ be a sequence of functions and $r:\cE\to\bbR$ be a non-negative function so that  $\int r(x)d\mu(x)=1$ (i.e. $r$ a probability density with respect to $\mu$). 
Consider the functions $S_n:\cE\to\bbR^d$ given by
\[
S_n(x)=\sum_{j=0}^{n-1}g_j(X_j(x),X_{j+1}(x),...)=\sum_{j=0}^{n-1}g_j(\overline{X}_j(x)).
\]
Let $\nu$ be the probability measure on $\cE$ defined by $d\nu=rd\mu$. We can view $S_n=S_n(x)$ as a random variable when $x$ is distributed according to either $\mu$ or $\nu$. We denote these random variables by $S_{n,\mu}$ and $S_{n,\nu}$, respectively. Our first result is the following:

\begin{theorem}\label{Theorem1}
Suppose that Assumption \ref{Ass1} holds true.
Assume also that that for some two conjugate exponents $p$ and $q$ we have that 
 $r$ lies in the $L^p(\mu)$-closure of $B\cap L^p(\mu)$ and $g_j\circ \overline{X}_j\in L^q(\mu)$ for all $j\geq0$.
 Then under Assumption \ref{Ass1}, for every sequence $(b_n)_n$  of positive numbers which tends to $\infty$ (as $n\to\infty$), the sequence $S_{n,\mu}/b_n$ converges in distribution  if and only if $S_{n,\nu}/b_n$ converges in distribution, and in the latter case both converge towards to the same limit.
\end{theorem}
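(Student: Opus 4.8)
The plan is to pass to characteristic functions and reduce the whole statement to a single pointwise limit. For $t\in\bbR^d$ set $\phi_{n,\mu}(t)=\int e^{i\langle t,\,S_n(x)/b_n\rangle}\,d\mu(x)$ and $\phi_{n,\nu}(t)=\int r(x)\,e^{i\langle t,\,S_n(x)/b_n\rangle}\,d\mu(x)$, the characteristic functions of $S_{n,\mu}/b_n$ and $S_{n,\nu}/b_n$. The first step is to observe that it suffices to prove
\[
\big|\phi_{n,\nu}(t)-\phi_{n,\mu}(t)\big|\to0\quad\text{as }n\to\infty,\ \text{for every }t\in\bbR^d.
\]
Indeed, if $S_{n,\mu}/b_n$ converges in distribution then $\phi_{n,\mu}(t)$ converges pointwise to the characteristic function $\hat\rho$ of some probability measure $\rho$ on $\bbR^d$, hence so does $\phi_{n,\nu}(t)$; since $\hat\rho$ is continuous at $0$, L\'evy's continuity theorem in $\bbR^d$ gives that $S_{n,\nu}/b_n$ converges in distribution to $\rho$, and the converse implication, together with the equality of the limits, follows the same way with $\mu$ and $\nu$ exchanged.

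To prove the pointwise limit, fix $k\ge1$ and split $S_n=S_k+R_{n,k}$ for $n>k$, where $R_{n,k}=\sum_{j=k}^{n-1}g_j(\overline{X}_j)$. The key structural point is that for $j\ge k$ the point $\overline{X}_j(x)$ is obtained from $\overline{X}_k(x)$ by deleting its first $j-k$ coordinates, so $R_{n,k}=h_{n,k}(\overline{X}_k)$ for a measurable $h_{n,k}:\cY_k\to\bbR^d$; hence $f_{n,k}:=e^{i\langle t,\,h_{n,k}/b_n\rangle}$ is a complex-valued function of $\overline{X}_k$ with $\sup|f_{n,k}|\le1$, exactly of the form $f(x_k,x_{k+1},\dots)$ appearing in Assumption \ref{Ass1} (with the running index there equal to $k$). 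Using $e^{i\langle t,S_n/b_n\rangle}=e^{i\langle t,S_k/b_n\rangle}f_{n,k}(\overline{X}_k)$ and $|e^{i\theta}-1|\le|\theta|$, one gets, for $n>k$,
\[
\big|\phi_{n,\nu}(t)-\phi_{n,\mu}(t)\big|\le\frac{|t|}{b_n}\int r\,|S_k|\,d\mu+\frac{|t|}{b_n}\int|S_k|\,d\mu+\Big|\int(r-1)\,f_{n,k}(\overline{X}_k)\,d\mu\Big|.
\]
By the integrability hypotheses of the theorem (in the second case after applying H\"older's inequality to $r\in L^p(\mu)$ and $g_j(\overline{X}_j)\in L^q(\mu)$) the two integrals $\int r\,|S_k|\,d\mu$ and $\int|S_k|\,d\mu$ are finite, and since $b_n\to\infty$ the first two terms tend to $0$ as $n\to\infty$ for each fixed $k$; only the last term requires the decay of correlations.

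If $r\in B$, then since $\int r\,d\mu=1$ Assumption \ref{Ass1} bounds the last term by $\|r\|\,\del_k$, so $\limsup_{n\to\infty}\big|\phi_{n,\nu}(t)-\phi_{n,\mu}(t)\big|\le\|r\|\,\del_k$ for every $k$, and letting $k\to\infty$ proves the pointwise limit. In the second case, pick $r_m\in B\cap L^p(\mu)$ with $\|r_m-r\|_{L^p}\to0$, arranged so that $\int r_m\,d\mu=1$ and $\sup_m\|r_m\|<\infty$ (in the examples this is automatic, e.g.\ for the $B$ of Example \ref{Eg2} one may take $r_m=\bbE_\mu[r\mid X_0,\dots,X_m]$, which has $\int r_m\,d\mu=1$, lies in $B$, and satisfies $\|r_m\|_{L^p}\le\|r\|_{L^p}$). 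Then, using $\sup|f_{n,k}|\le1$, the inequality $\|r_m-r\|_{L^1}\le\|r_m-r\|_{L^p}$ on the probability space $(\cE,\cF,\mu)$, and Assumption \ref{Ass1} applied to $s=r_m$, the last term is at most $\|r_m-r\|_{L^p}+\|r_m\|\,\del_k$; hence $\limsup_{n\to\infty}\big|\phi_{n,\nu}(t)-\phi_{n,\mu}(t)\big|\le\|r_m-r\|_{L^p}+(\sup_m\|r_m\|)\,\del_k$ for all admissible $k$ and $m$, and letting first $k\to\infty$ and then $m\to\infty$ yields the pointwise limit. With the reduction of the first paragraph this completes the proof.

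The main obstacle is expected to be the second (approximation) case: producing an $L^p(\mu)$-approximating sequence that actually lies in $B$ and is literally admissible in Assumption \ref{Ass1} --- of total mass one and of uniformly bounded ``norm'' $\|\cdot\|$ --- and then handling the triple limit ($n\to\infty$, then $k\to\infty$, then $m\to\infty$) in the correct order. The remaining, ``dynamical'' part is just the classical Eagleson mechanism: the fixed initial block $S_k$ is washed out by $b_n\to\infty$, and $R_{n,k}$ is a bounded function of the ``future'' $\overline{X}_k$, to which the decay-of-correlations bound (\ref{Dec}) applies directly.
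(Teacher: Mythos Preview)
Your proof is correct and follows the same mechanism as the paper's: reduce via L\'evy's continuity theorem to showing the two characteristic functions have the same pointwise limit, split $S_n$ into an initial block $S_k$ (killed by $b_n\to\infty$) and a tail that is a bounded function of $\overline{X}_k$ (handled by Assumption~\ref{Ass1}), and in the second case approximate $r$ in $L^p(\mu)$ by elements of $B$.

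The only organizational difference is that the paper packages the two-step limit into a single one by choosing a diagonal sequence $a_n\to\infty$ with $a_n<n$ and $c_{a_n}=o(b_n)$, where $c_k=\int|S_k|\,d\mu+\int r|S_k|\,d\mu$, and then bounds everything in terms of $c_{a_n}/b_n$ and $\del_{a_n}$; you instead keep $k$ fixed, take $\limsup_{n\to\infty}$, and then let $k\to\infty$. Both are standard and equivalent. One small point: your extra requirement $\sup_m\|r_m\|<\infty$ is not needed given the order of limits you already state (for each fixed $m$, $\|r_m\|\del_k\to0$ as $k\to\infty$), so you can drop it; the paper in fact sidesteps this entirely by fixing $\ve>0$, choosing a single $s\in B$ with $\mu(s)=1$ and $\|r-s\|_{L^p}<\ve$, running the $r\in B$ argument with $s$ in place of $r$, and then letting $\ve\to0$.
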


For real valued $g_j$'s, Theorem \ref{Theorem1} follows from \cite{Eag} 
when the tail $\sig$-algebra $\mathcal T$ of the sequence $Y_j=g_j(X_j,X_{j+1},...)$ is trivial. Under Assumption \ref{Ass1}, it is clear that $\mathcal T$ is trivial when the $L^1$-closure of $B$ contains all integrable $\mathcal T$-measurable functions. Therefore, we essentially do not consider Theorem \ref{Theorem1} as a new result, but we still present a proof since later on we will adapt its arguments to obtain a quantitative version, as well as a  version corresponding to the weak invariance principle.

\begin{proof}
First, for any  real $t$ we  set $t_n=t/b_n$. By the Levi continuity theorem it is enough to show that for any  fixed $t$ we have
\[
\lim_{n\to\infty}|\mu(r\cdot e^{it_n S_n})-\mu(e^{it_n S_n})|=0.
\]
In the case when $r$ does not lie in $B$, given $\ve>0$ we can first
approximate $r$ within $\ve$ in $L^p(\mu)$ by a (nonnegative) $s\in B$ so that $\mu(s)=1$. Then for any real $t$ we have 
\[
|\mu(r\cdot e^{it_n S_n})-\mu(s\cdot e^{it_n S_n})|\leq \|r-s\|_{L^p(\mu)}<\ve.
\]
Therefore, it is enough to prove the theorem when $r\in B$ and the integrals $\int |S_n(x)|d\mu(x)$ and $\int r(x) |S_n(x)|d\mu(x)$ are finite for all natural $n$.

Next, since $\lim_{n\to\infty}b_n=\infty$, for any sequence $(c_k)$ the exists a (weakly increasing) sequence $(a_n)$ of natural numbers which tends to $\infty$ as $n\to\infty$ so that 
$c_{a_n}=o(b_n)$. It is also clear that we can assume that $a_n<n$.
Consider the sequence
\[
c_k=\int |S_k(x)|d\mu(x)+\int r(x) |S_k(x)|d\mu(x)
\]
and let $a_n$ be so that $c_{a_n}=o(b_n)$.

Next, by the mean value theorem,
\[
\left|\mu(r\cdot e^{it_n S_n})-\mu(r\cdot e^{it_n (S_n-S_{a_n})})\right|\leq |t|b_{n}^{-1}\mu(r\cdot |S_{a_n}|)\leq |t|c_{a_n}/b_n\to 0\text { as }n\to\infty.
\]
Relying on Assumption \ref{Ass1}, taking into account that $\mu(r)=1$ and that $S_{n}-S_{a_n}$ is a function of $\overline{X}_{a_n}$,
we have
\[
|\mu(r\cdot e^{it_n(S_n-S_{a_n})})-
\mu(e^{it_n(S_n-S_{a_n})})|\leq \|r\|\del_{a_n}\to 0\text { as }n\to\infty.
\] 
Finally, by the mean value theorem,
\[
\left|\mu(e^{it_n(S_n-S_{a_n})})-\mu(e^{it_n S_n})\right|\leq |t|b_{n}^{-1}\mu(|S_{a_n}|)\leq |t|c_{a_n}/b_n\to 0\text { as }n\to\infty.
\]
\end{proof}

\subsection{Recentering after change of measure}\label{SecCent}
In applications, it is often the case
where $(S_{n,\mu}-\bbE[S_{n,\mu}])/b_n$ converges in distribution, and this just means that we  replace $g_j$ with $g_j-\mu(g_j(\overline{X}_j))$ in the setup of the previous section. Applying Theorem \ref{Theorem1} we infer  that $(S_{n,\nu}-\bbE[S_{n,\mu}])/b_n$ converges in distribution, and to the same limit. The ``centering" term $\bbE[S_{n,\mu}]$ is not natural in the latter convergence, and it is natural to inquire whether $(S_{n,\nu}-\bbE[S_{n,\nu}])/b_n$ converges in distribution. When all the $g_j$'s are bounded and $r\in B$, under Assumption \ref{Ass1} we have
\[
\left|\bbE[S_{n,\nu}]-\bbE[S_{n,\mu}]\right|\leq \sum_{j=0}^{n-1}|\mu(r g_j(\overline{X}_j))-\mu(r)\mu(g_j(\overline{X}_j))|
\leq \|r\|\sum_{j=0}^{n-1}\del_j\|g_j\|_\infty.
\]
Therefore, if $\sum_{j=0}^{n-1}\del_j\|g_j\|_\infty=o(b_n)$ we obtain that the difference between
$(S_{n,\nu}-\bbE[S_{n,\nu}])/b_n$ and $(S_{n,\nu}-\bbE[S_{n,\mu}])/b_n$ converges almost surely to $0$, which yields the desired convergence in distribution of $(S_{n,\nu}-\bbE[S_{n,\nu}])/b_n$. Of course, the assumption that $g_j$'s are bounded can be weakened. For any sequence of $M_j>0$ we have 
\begin{eqnarray*}
\sum_{j=0}^{n-1}|\mu(r g_j(\overline{X}_j))-\mu(r)\mu(g_j(\overline{X}_j))|\leq \|r\|\sum_{j=0}^{n-1}M_j\del_j\\+
\sum_{j=0}^{n-1}|\mu\big((r+1) g_j(\overline{X}_j)\bbI(|g_j(X_j)|\geq M_j)\big)|.
\end{eqnarray*}
By the H\"older and  the Markov inequalities, for any $p_1,p_2,p_3\geq1$ so that $\frac{1}{p_1}+\frac{1}{p_2}+\frac{1}{p_3}=1$ we have 
\[
|\mu\big((r+1) g_j(\overline{X}_j)\bbI(|g_j(\overline{X}_j)|\geq M_j)\big)|\leq \|r+1\|_{p_1}\|g(\overline{X}_j)\|_{p_2}\|g(\overline{X}_j)\|_{p_2}^{p_2/p_3}M_j^{-p_2/p_3}
\]
where $\|f\|_p:=\|f\|_{L^p(\mu)}$ for any $p$ and a vector-valued function $f$ on $\cE$. 
This yields the following simple result:
\begin{proposition}\label{Cprop}
Suppose Assumption \ref{Ass1} hold, that $r\in B$ and that there are  $p_1,p_2,p_3$  and a sequence $(M_j)$ of positive numbers so that $\frac{1}{p_1}+\frac{1}{p_2}+\frac{1}{p_3}=1$, $\|r\|_{p_1}<\infty$
and 
\[
\cM_n:=\sum_{j=0}^{n-1}\left(M_j\del_j+\|g_j(\overline{X}_j)\|_{p_2}^{1+p_2/p_3}M_{j}^{-p_2/p_3}\right)=o(b_n).
\]
Then $|\bbE[S_{n,\nu}]-\bbE[S_{n,\mu}]|\leq \cM_n=o(b_n)$
 and therefore  $(S_{n,\nu}-\bbE[S_{n,\mu}])/b_n$ converges in distribution if and only if $(S_{n,\nu}-\bbE[S_{n,\nu}])/b_n$ converge in distribution, and to the same limit.
\end{proposition}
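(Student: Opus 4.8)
The plan is to repeat the displayed computation that precedes the statement, keeping track of the constants, so as to turn the estimate into a clean $o(b_n)$ bound, and then to close the argument with a Slutsky-type observation. First I would record that, since $d\nu=rd\mu$ and $\mu(r)=1$,
\[
\bbE[S_{n,\nu}]-\bbE[S_{n,\mu}]=\sum_{j=0}^{n-1}\big(\mu\big(r\,g_j(\overline{X}_j)\big)-\mu(r)\,\mu\big(g_j(\overline{X}_j)\big)\big),
\]
all terms being well defined because the assumption $\cM_n<\infty$ forces $g_j(\overline{X}_j)\in L^{p_2}(\mu)$, and then H\"older with the exponents $p_1,p_2,p_3$ (using $\|1\|_{p_3}=1$ on the probability space) gives $r\,g_j(\overline{X}_j)\in L^1(\mu)$ as well, so $\bbE[S_{n,\mu}]$ and $\bbE[S_{n,\nu}]$ are finite.

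Next I would perform the truncation. For each $j$ write $g_j=g_j\bbI(|g_j|<M_j)+g_j\bbI(|g_j|\ge M_j)$. Applying Assumption \ref{Ass1} with $s=r$ (allowed since $r\in B$ and $\mu(r)=1$) to the bounded function $g_j(\overline{X}_j)\bbI(|g_j(\overline{X}_j)|<M_j)$, whose supremum is at most $M_j$ (coordinatewise, which introduces at most a fixed dimensional constant that I absorb), bounds the main part of the $j$-th term by $\|r\|\,M_j\del_j$. For the tail part one has
\[
\big|\mu\big(r\,g_j(\overline{X}_j)\bbI(|g_j|\ge M_j)\big)-\mu(r)\mu\big(g_j(\overline{X}_j)\bbI(|g_j|\ge M_j)\big)\big|\le \mu\big((r+1)\,|g_j(\overline{X}_j)|\,\bbI(|g_j(\overline{X}_j)|\ge M_j)\big),
\]
and then H\"older ($\le\|r+1\|_{p_1}\|g(\overline{X}_j)\|_{p_2}\,\mu(|g_j|\ge M_j)^{1/p_3}$) followed by Markov's inequality ($\mu(|g_j|\ge M_j)\le M_j^{-p_2}\|g(\overline{X}_j)\|_{p_2}^{p_2}$) yields the bound $\|r+1\|_{p_1}\,\|g(\overline{X}_j)\|_{p_2}^{1+p_2/p_3}\,M_j^{-p_2/p_3}$, where $\|r+1\|_{p_1}\le\|r\|_{p_1}+1<\infty$.

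Summing over $j=0,\dots,n-1$ then gives $|\bbE[S_{n,\nu}]-\bbE[S_{n,\mu}]|\le C\,\cM_n$ for a constant $C$ depending only on $\|r\|$, $\|r\|_{p_1}$ and $d$, so the hypothesis $\cM_n=o(b_n)$ delivers the first conclusion $|\bbE[S_{n,\nu}]-\bbE[S_{n,\mu}]|=o(b_n)$. Finally, since
\[
\frac{S_{n,\nu}-\bbE[S_{n,\nu}]}{b_n}-\frac{S_{n,\nu}-\bbE[S_{n,\mu}]}{b_n}=\frac{\bbE[S_{n,\mu}]-\bbE[S_{n,\nu}]}{b_n}\longrightarrow 0
\]
is a deterministic null sequence, Slutsky's theorem shows that $(S_{n,\nu}-\bbE[S_{n,\mu}])/b_n$ converges in distribution if and only if $(S_{n,\nu}-\bbE[S_{n,\nu}])/b_n$ does, and then to the same limit (Theorem \ref{Theorem1} being what links this, in applications, to convergence with respect to $\mu$). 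I do not expect a real obstacle here; the only points deserving care are that Assumption \ref{Ass1} is stated for scalar $f$, so the vector-valued truncations must be handled coordinatewise, and the integrability bookkeeping, namely that the finiteness of $\cM_n$ is exactly what guarantees $g_j(\overline{X}_j)\in L^{p_2}(\mu)$ and hence that every integral appearing above is finite.
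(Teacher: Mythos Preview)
Your proposal is correct and follows essentially the same approach as the paper: the proposition is stated in the paper immediately after precisely the truncation-plus-H\"older/Markov computation you reproduce, so the paper's ``proof'' is exactly the displayed estimates you invoke. Your additions---the integrability bookkeeping and the explicit Slutsky step---are fine refinements but do not change the strategy.
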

For instance, when $\sup_j\|g(\overline{X}_j)\|_{p_2}<\infty$ and $p_2/p_3$  are larger than $1$ and $\del_j\leq Cj^{-2-\ve}$ then we can take $M_j=j$ and get that $\cM_n$ is bounded in $n$. 
When $b_n=O(n^a)$ for some $a$ we can get rid of the power $\ve$ in the upper bound of $\del_j$.
 Of course, many other, more-explicit, moment conditions and decay rates of $\del_j$ can be imposed to insure that $\cM_n=o(b_n)$. We note that in most of the applications in Example \ref{Eg1} the space $B$ is composed of bounded functions, and so in this case we can take $p_1=\infty$.

\subsection{Renormaliztion after change of measure}\label{VarSec}
We assume here that the functions $g_j$ are real-valued.
For random dynamical systems the CLT holds true for the corresponding centered random birkhoff sums normalized by $b_n=\sqrt n$, but for sequential dynamical systems, inhomogeneous Markov chains and other non-stationary mixing sequences  usually the CLT holds true (only) with the self normalizing sequence $b_{n,\mu}=\sqrt{\text{Var}(S_{n,\mu})}$, especially because $\text{Var}(S_{n,\mu})$ may have various asymptotic behaviors.
 Of course, this requires us to assume that $\lim_{n\to\infty}{b_{n,\mu}}=\infty$, and we refer the readers to \cite{HafSDS} and \cite{DS} (when $g_j$ depends only on $(X_j,X_{j+1})$) for  characterizations of the latter convergence for several classes of sequential dynamical systems and  inhomogeneous Markov chains.
In Section \ref{SecCent} we showed that, under certain conditions,  the weak convergence of $(S_{n,\nu}-\bbE[S_{n,\nu}])/b_{n,\mu}$ follows from the convergence corresponding to $\mu$, and a natural question   is whether the convergence of $(S_{n,\nu}-\bbE[S_{n,\nu}])/b_{n,\nu}$ can also be derived, where $b_{n,\nu}=\sqrt{\text{Var}(S_{n,\nu})}$.

\begin{proposition}\label{Vprop}
Under Assumption \ref{Ass1},
 set $G_k=g_k\circ \overline{X}_k$, and assume that $\bbE_\mu[G_k]=0$ for all $k$.
Suppose also that $\del_k=C_1\del^k$ for some $C_1>0$ and $\del\in(0,1)$,  and that 
\begin{equation}\label{ExtCond}
\max(\|G_k\|,\|r\cdot G_k\|)\leq b\del^{-ak}\text{ for some } a,b>0 \text { and all } k\geq0.
\end{equation}
Moreover, assume that there are $c_0,C_0>0$, $\be\in(0,1)$ and $p>3$ so that for any $k\geq0$
\begin{equation}\label{p ass}
\|G_k\|_p\leq C_0e^{c_0 k^\beta}
\end{equation}
and $\|r\|_p<\infty$.
Then  there is $C>0$ so that for every $n\geq1$,
\[
\left|\bbE[S_{n,\mu}^2]-\bbE[S_{n,\nu}^2]\right|\leq C.
\]
Therefore,
\[
\left|\text{Var}(S_{n,\mu})-\text{Var}(S_{n,\nu})\right|\leq C+\cM_n^2
\]
where $\cM_n$ comes from Proposition \ref{Cprop}. Hence, when $\cM_n=o(b_{n,\mu})$ and $b_{n,\mu}$ tends to $\infty$ as $n\to\infty$ then  
\[
\lim_{n\to\infty}\frac{b_{n,\mu}}{b_{n,\nu}}=1
\]
and  therefore $(S_{n,\mu}-\bbE[S_{n,\mu}])/b_{n,\mu}$ converges in distribution if and only 
 $(S_{n,\nu}-\bbE[S_{n,\nu}])/b_{n,\nu}$ converges in distribution, and to the same limit.
\end{proposition}
In the circumstances of Example \ref{Eg1}, condition (\ref{ExtCond}) holds true  when $\|\cdot\|$ is an H\"older norm (as in \cite{MSU}, \cite{HK} or \cite{HafSDS}) or some total variation norm (as in \cite{DFGTV1}) and
 $g_j(\overline {X}_j(x))=h_j(T_0^j x)$, where $\|h_j\|$ are uniformly bounded in $j$. In the circumstances of Example \ref{Eg2}, the norm $\|\cdot\|$ is some $L^p$-norm and so (\ref{ExtCond}) will be satisfied if the functions $G_k$ are bounded $L^q$  for $q>p$ and $\|r\|_{q'}<\infty$, where $1/p=1/q+1/q'.$

\begin{proof}[Proof of Proposition \ref{Vprop}]
First, we have
\begin{equation}\label{Bas}
\left|\bbE[S_{n,\mu}^2]-\bbE[S_{n,\nu}^2]\right|\leq 2\sum_{0\leq k\leq j<n}\left|\text{Cov}_{\mu}(r, G_kG_j)\right|.
\end{equation}
Let $0\leq k\leq j<n$ be so that $j\geq (a+1)k$, where $a$ comes from \eqref{ExtCond}. Moreover let $\beta<\al<1$, where $\be$ comes from \eqref{p ass} and set
\[
\tilde G_j=G_j\bbI(|G_j|\leq e^{j^\al}).
\]
Let $q_2$ be the conjugate exponent of $p_2=p/2$ and $q_3$ be the conjugate exponent of $p_3=p/3$.
Let use write 
$$
\text{Cov}_{\mu}(r,G_kG_j)=\text{Cov}_{\mu}(r,G_k\tilde G_j)+\cD_{k,j}.
$$
Using (\ref{p ass}) and the H\"older and the Markov inequalities, we have
$$
|\cD_{k,j}|\leq \big(\|rG_kG_j\|_{p_3}^{1+p_3/q_3}+\|G_k G_j\|_{p_2}^{1+p_2/q_2}\big)e^{-c j^\al}\\\leq C_2e^{-c_2j^\al}
$$
where $c=\min(p_2/q_2,p_3/q_3)$ and $c_2$ and $C_2$ are some positive constants. The contribution to the right hand  side of (\ref{Bas}) coming from $\cD_{j,k}$,  with $j\geq (a+1)k$ is therefore controlled by
\[
\sum_{j=0}^{n-1}\sum_{k=0}^{j}e^{-c_2j^\al}\leq\sum_{j=1}^{\infty}j e^{-c_2j^\al}<\infty.
\]
Now we will control the contribution coming from $\text{Cov}_{\mu}(r, G_k\tilde G_j)$, when $j\geq(a+1)k$. First, using \eqref{p ass} and that $\|r\|_3<\infty$ we have
$$|\mu(rG_k)|\leq \|r\|_2\|G_k\|_2\leq C_0\|r\|_2 e^{c_0 k^\be}\leq C_0\|r\|_2e^{c_0 j^\beta}.$$
Next, using (\ref{Dec}) with $n=j$ and $s=rG_k$, (\ref{ExtCond}) we get that 
\[
\left|\bbE_\mu[rG_k\tilde G_j]\right|\leq C_0\|r\|_2e^{c_0 j^\beta}|\bbE_\mu[\tilde G_j]|+C_1b e^{j^\al}\del^{j-ak}.
\]
Moreover, 
\[
\left|\bbE_\mu[G_k\tilde G_j]\right|\leq b e^{j^\al}\del^{j-ak}
\]
where we have also used that $G_k$ is centered.
Next, using the Markov inequality we have that 
\[
|\bbE_\mu[\tilde G_j]|=|\bbE[G_j\bbI(|G_j|>e^{j^\al})]|\leq \|G_j\|_{p}^{1+q/p}e^{-(q/p) j^\al}
\]
where $q$ is the conjugate exponent of $p$ (and $p$ comes from \eqref{p ass}).
We conclude that, in absolute value, the contribution to the right hand side of (\ref{Bas}) coming from the pairs $j$ and $k$ so that $j\geq (a+1)k$ does not exceed a constant times
\[
\sum_{j=0}^{n-1}e^{j^\al}\sum_{k=0}^{j/(a+1)}\del^{j-ak}+\sum_{j=0}^{n-1}(j+1)e^{-c_2j^\al}\leq 
c\sum_{j=0}^{\infty}e^{j^\al}\del^{j/(a+1)}+\sum_{j=0}^\infty(j+1)e^{-c_2j^\al}<\infty
\]
where $c$ is some constant.
Now we estimate the contribution coming from pairs $(k,j)$ such that $k\leq j\leq (a+1)k$. First, by the Markov and the H\"older inequalities and (\ref{p ass}) we have
\[
\left|\text{Cov}_{\mu}(r,G_kG_j)\right|\leq \left|\text{Cov}_{\mu}(r,G_kG_j\bbI(|G_k G_j|\leq e^{j^\al}))\right|+ C_4e^{-c_4 j^\al}
\] 
where  $C_4$ and $c_4$ are some positive constants. Using now (\ref{Dec}) with $s=r$ and $f$ so that $f\circ\overline{ X}_k=G_jG_k\bbI(|G_j G_j|\leq e^{j^\al})$ we get that 
\[
\left|\text{Cov}_{\mu}(r,G_kG_j)\right|\leq\|r\|\del^{k}e^{j^\al}+C_4e^{-c_4 j^\al}.
\]
Therefore, there are constants $C_5,C_6>0$ so that
\[
\sum_{k=0}^{n-1}\sum_{j=k}^{(a+1)k}\left|\text{Cov}_{\mu}(r, G_kG_j)\right|\leq 
C_5\sum_{k=0}^{\infty}(k+1)e^{(a+1)^\al k^\al}\del^k+C_6\sum_{k=0}^{\infty}(k+1)e^{-c_4 k^\al}<\infty.
\]
\end{proof}

\begin{remark}
The arguments in the proof of Proposition \ref{Vprop} show that the conclusion of the proposition holds true if we assume that $\|G_k\|_p\leq \del^{-u k}$ for some sufficiently small $u$ and all $k\geq0$.
\end{remark}
\subsection{Quantitative versions for scalar-valued functions}
Let $X$ and $Y$ be random variables. Recall that the Kolmogorov (uniform) metric $d_K(X,Y)$ between the laws of $X$ and $Y$ is given by 
\[
d_K(X,Y)=\sup_{t\in\bbR}|P(X\leq t)-P(Y\leq t)|.
\]
We have the following (well known) version of the, so called, Berry-Esseen inequality:
\begin{lemma}\label{BE lemma}
Let $X$ and $Y$ be two real-valued random variables, and let $\varphi_X$ and $\varphi_Y$ be their characteristic functions, respectively. Let $Z$ be another random variable which has a bounded density function $f_Z$.
Then for every $T>0$ we have
\[
d_K(X,Y)\leq 4cd_K(Y,Z)+\int_{-T}^T\left|\frac{\varphi_X(t)-\varphi_Y(t)}{t}\right|dt+\frac{2\|f_Z\|_\infty c^2}{T}
\]
where $\|f_Z\|_\infty=\sup f_Z$ and 
$c>0$ is some absolute constant which can be taken to be the root of the equation
\[
\int_0^{c/2}\frac{\sin^2 x}{x^2}=\frac{\pi}4+\frac1 8.
\]
In particular,
\[
d_K(X,Z)\leq (4c+1)d_K(Y,Z)+\int_{-T}^T\left|\frac{\varphi_X(t)-\varphi_Y(t)}{t}\right|dt+\frac{2\|f_Z\|_\infty c^2}{T}.
\]
\end{lemma}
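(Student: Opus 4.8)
I plan to prove the lemma by a smoothing argument of Esseen type; the ``In particular'' statement is then just the triangle inequality $d_K(X,Z)\le d_K(X,Y)+d_K(Y,Z)$ combined with the first inequality. Write $F_X,F_Y,F_Z$ for the distribution functions, $\Delta=F_X-F_Y$, and $\eta=d_K(X,Y)=\sup_x|\Delta(x)|$ (a routine reduction handles one-sided limits at atoms). If $\eta\le d_K(Y,Z)$ the asserted inequality is immediate because $4c>1$, so I may assume $\eta>d_K(Y,Z)$; I may also assume $\int_{-T}^{T}\left|(\varphi_X(t)-\varphi_Y(t))/t\right|dt<\infty$, since otherwise there is nothing to prove. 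Finally, after possibly replacing $(X,Y,Z)$ by $(-X,-Y,-Z)$ --- which changes none of the quantities in the statement, the integral being unchanged because $|\varphi_{-X}(t)-\varphi_{-Y}(t)|=|\varphi_X(-t)-\varphi_Y(-t)|$ --- I may assume there is $x_0$ with $\Delta(x_0)=\eta$.

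First I would work on the Fourier side. Let $v_T$ be the Fej\'er density, i.e.\ the probability density whose characteristic function is $(1-|t|/T)^{+}$, supported on $[-T,T]$, and set $\tilde\Delta=\Delta*v_T$, the difference of the distribution functions of $X+V$ and $Y+V$ for an independent $V\sim v_T$. Both smoothed laws have densities bounded by $v_T(0)=\frac{T}{2\pi}$, so $\tilde\Delta$ is $C^1$ and vanishes at $\pm\infty$, and Fourier inversion (legitimate by the integrability assumption above) gives $\tilde\Delta(x)=\frac{1}{2\pi}\int_{-T}^{T}\frac{(\varphi_X(t)-\varphi_Y(t))(1-|t|/T)}{-it}\,e^{-itx}\,dt$; taking absolute values,
\[
\sup_x|\tilde\Delta(x)|\ \le\ \frac{1}{2\pi}\int_{-T}^{T}\left|\frac{\varphi_X(t)-\varphi_Y(t)}{t}\right|dt .
\]

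Next comes the de-smoothing step, in which $Z$ enters. Comparing $F_Y$ with $F_Z$ at both ends gives the approximate-Lipschitz bound $|F_Y(b)-F_Y(a)|\le\|f_Z\|_\infty|b-a|+2d_K(Y,Z)$; together with the monotonicity of $F_X$ this yields, for $z\ge x_0$, the one-sided estimate $\Delta(z)\ge\eta-\|f_Z\|_\infty(z-x_0)-2d_K(Y,Z)$, while $\Delta\ge-\eta$ everywhere. I would then evaluate $\tilde\Delta$ at the shifted point $x_0+h$ with $h$ of order $1/T$: on the symmetric interval of radius $h$ about $x_0+h$ one has $\Delta\ge\eta-2\|f_Z\|_\infty h-2d_K(Y,Z)$, so integrating against $v_T$ and using $\Delta\ge-\eta$ on the complement gives $\tilde\Delta(x_0+h)\ge\eta(2P-1)-2P(\|f_Z\|_\infty h+d_K(Y,Z))$, where $P=\int_{-h}^{h}v_T=\frac{2}{\pi}\int_0^{hT/2}\frac{\sin^2 u}{u^2}\,du$. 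The choice $h=c/T$ makes $\int_0^{hT/2}=\int_0^{c/2}$, so by the equation defining $c$ one gets $P=\frac{2}{\pi}\big(\frac{\pi}{4}+\frac{1}{8}\big)=\frac{1}{2}+\frac{1}{4\pi}$, i.e.\ $2P-1=\frac{1}{2\pi}$. Inserting the bound from the first step then produces an inequality $d_K(X,Y)\le\int_{-T}^{T}\left|(\varphi_X-\varphi_Y)/t\right|dt+C_1 d_K(Y,Z)+C_2\|f_Z\|_\infty/T$ in which the coefficient of the integral is exactly $1$.

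The remaining --- and only genuinely delicate --- work is to bring $C_1$ down to $4c$ and $C_2$ down to $2c^2$. The crude estimate just sketched (linear lower bound kept only on $[x_0-h,x_0+h]$, floor $-\eta$ outside) already gives $C_1=2\pi+1<4c$, but $C_2=(2\pi+1)c$, which exceeds $2c^2$; to get $2c^2$ one should retain the one-sided linear lower bound for $\Delta$ beyond that interval, out to where it meets the floor $-\eta$, and then estimate the resulting integral against $v_T$ with care --- here one must remember that the Fej\'er kernel has infinite first absolute moment, so the bookkeeping must be arranged so that no divergent (in particular, no logarithmic) term appears. Once $C_1\le 4c$ and $C_2\le 2c^2$ are established, the stated inequality and, via the triangle inequality, its ``In particular'' consequence follow.
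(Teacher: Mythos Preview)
Your plan is sound in outline, but it takes a genuinely different and much longer route than the paper. The paper does \emph{not} redo the Fej\'er-kernel smoothing argument: it simply quotes, as a black box, an Esseen-type inequality from \cite{Lin} (Section~4.1 there, with their parameter $b=1$), which already delivers
\[
d_K(X,Y)\le \int_{-T}^{T}\left|\frac{\varphi_X(t)-\varphi_Y(t)}{t}\right|\,dt
+2T\sup_{x\in\bbR}\int_{-c/T}^{c/T}\big|F_Y(x+y)-F_Y(x)\big|\,dy,
\]
with $c$ exactly the constant in the statement. All that remains is to bound the modulus-of-continuity term for $F_Y$: the paper writes $|F_Y(x+y)-F_Y(x)|\le|F_Z(x+y)-F_Z(x)|+2d_K(Y,Z)\le \|f_Z\|_\infty|y|+2d_K(Y,Z)$, integrates over $y\in[-c/T,c/T]$, and multiplies by $2T$. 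This produces the $4c\,d_K(Y,Z)$ and $2\|f_Z\|_\infty c^2/T$ terms in three lines; the ``in particular'' part is then the triangle inequality, exactly as you say.

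What your approach buys is self-containedness: you do not need to locate the specific inequality in \cite{Lin}. The cost is precisely the step you flag as ``the only genuinely delicate work''. Your crude de-smoothing already gives the integral term with coefficient $1$ and $C_1=2\pi+1<4c$, but pushing $C_2$ down from $(2\pi+1)c$ to $2c^2$ means keeping the one-sided linear minorant for $\Delta$ all the way out to where it hits $-\eta$ and integrating it against the Fej\'er kernel, while avoiding the logarithmic divergence coming from its infinite first moment. That is doable --- it is essentially the classical Esseen computation --- but it is exactly the labor the paper outsources to \cite{Lin}. If you want the stated constants with minimal effort, follow the paper and invoke the textbook inequality; if you prefer a self-contained proof, be aware that the remaining bookkeeping is a page of careful estimates, not a one-liner.
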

\begin{proof}
Taking $b=1$ at the beginning of Section 4.1 in \cite{Lin} we get that 
\begin{eqnarray*}
d_K(X,Y)\leq \int_{-T}^T\left|\frac{\varphi_X(t)-\varphi_Y(t)}{t}\right|dt\\+2T\sup_{x\in\bbR}\int_{-c/T}^{c/T}|P(Y\leq x+y)-P(Y\leq x)|dy.
\end{eqnarray*}
Next, it clear that for every $x\in\bbR$,
\begin{eqnarray*}
\int_{-c/T}^{c/T}|P(Y\leq x+y)-P(Y\leq x)|dy\leq 
\int_{-c/T}^{c/T}|P(Z\leq x+y)-P(Z\leq x)|dy\\+2cd_K(Y,Z)/T\leq 
\|f_Z\|_\infty\int_{-c/T}^{c/T}|y|dy+2cd_K(Y,Z)/T.
\end{eqnarray*}
\end{proof}

Lemma \ref{BE lemma} makes it possible to estimate $d_K(S_{\nu,n}/b_n,Z)$ by means of $d_K(S_{\mu,n}/b_n,Z)$, namely we can estimate the error in the weak convergence of $S_{\nu,n}/b_n$ by means of the error term in the weak convergence of $S_{\mu,n}/b_n$. 
\begin{theorem}\label{QuantEag}
Let Assumption \ref{Ass1} hold,  and suppose that $r\in B$ and that  the functions $g_j$ are real-valued.  Then for every positive integer $\rho<n$,  $T\geq1$ and a random variable $Z$ with a bounded density function $f_Z$ we have
\begin{eqnarray}\label{GenB}
d_K(S_{\nu,n}/b_n,Z)\leq (4c+1)d_K(S_{\mu,n}/b_n,Z)+\\ \frac{2T\mu\big(|S_\rho|(1+r)\big)}{b_n}+4\del_\rho\|r\|\ln T+\frac{2\|f_Z\|_\infty c^2}{T}+\frac{2\mu\left((r+1)|S_n|\right)}{b_n T}\nonumber
\end{eqnarray}
where $c$ comes from Lemma \ref{BE lemma}.
\end{theorem}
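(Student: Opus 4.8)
The plan is to apply Lemma~\ref{BE lemma} with $X=S_{\nu,n}/b_n$ and $Y=S_{\mu,n}/b_n$, so that the task reduces to bounding the characteristic-function integral
\[
\int_{-T}^T\left|\frac{\varphi_X(t)-\varphi_Y(t)}{t}\right|dt
=\int_{-T}^T\left|\frac{\mu(r\cdot e^{it S_n/b_n})-\mu(e^{it S_n/b_n})}{t}\right|dt.
\]
The first term $(4c+1)d_K(S_{\mu,n}/b_n,Z)$ and the last term $2\|f_Z\|_\infty c^2/T$ are then exactly the corresponding terms in Lemma~\ref{BE lemma}, so everything comes down to estimating the middle integral by $2T\int|S_\rho|(2+r)\,d\mu/b_n+2\del_\rho\|r\|\ln T$.

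For a fixed $t\in[-T,T]$ I would run the same three-step comparison as in the proof of Theorem~\ref{Theorem1}, but now keeping the truncation index fixed at $\rho$ rather than letting it depend on $n$. Writing $t_n=t/b_n$: first, by the mean value theorem (applied to $u\mapsto e^{iu}$, whose derivative has modulus $1$),
\[
\left|\mu(r\cdot e^{it_n S_n})-\mu(r\cdot e^{it_n(S_n-S_\rho)})\right|\le |t_n|\,\mu(r|S_\rho|),
\]
and likewise $\left|\mu(e^{it_n S_n})-\mu(e^{it_n(S_n-S_\rho)})\right|\le |t_n|\,\mu(|S_\rho|)$; second, since $S_n-S_\rho$ is a function of $\overline X_\rho$, $r\in B$ and $\mu(r)=1$, Assumption~\ref{Ass1} (equation~(\ref{Dec})) gives
\[
\left|\mu(r\cdot e^{it_n(S_n-S_\rho)})-\mu(e^{it_n(S_n-S_\rho)})\right|\le \|r\|\,\del_\rho .
\]
Combining, $|\varphi_X(t)-\varphi_Y(t)|\le |t|b_n^{-1}\big(\mu(r|S_\rho|)+\mu(|S_\rho|)\big)+\|r\|\del_\rho$, and since $\mu(r|S_\rho|)+\mu(|S_\rho|)\le \int|S_\rho|(2+r)\,d\mu$ (indeed $r+1\le r+2$, and one uses $1\le 2$ for the second copy), we get
\[
\left|\frac{\varphi_X(t)-\varphi_Y(t)}{t}\right|\le b_n^{-1}\int|S_\rho|(2+r)\,d\mu+\frac{\|r\|\del_\rho}{|t|}.
\]

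Finally I integrate this bound over $t\in[-T,T]$: the first term contributes $2T b_n^{-1}\int|S_\rho|(2+r)\,d\mu$, and the second contributes $\|r\|\del_\rho\int_{-T}^T|t|^{-1}dt$. Here one must be slightly careful because $\int_{-T}^T|t|^{-1}dt$ diverges at $0$; the standard fix is that for $|t|$ small the trivial bound $|\varphi_X(t)-\varphi_Y(t)|\le |t|b_n^{-1}(\mu(r|S_\rho|)+\mu(|S_\rho|))+\|r\|\del_\rho$ is not the one to use near zero — instead, for $|t|\le 1$ one bounds $|(\varphi_X(t)-\varphi_Y(t))/t|$ by the Lipschitz estimate $b_n^{-1}\int|S_\rho|(2+r)d\mu$ plus, for the $\del_\rho$ part, one notes $\varphi_X(0)=\varphi_Y(0)=1$ so $(\varphi_X(t)-\varphi_Y(t))/t$ is actually controlled by the derivative, i.e. again by $b_n^{-1}\int|S_\rho|(2+r)d\mu$; thus the $\del_\rho$ term only needs to be integrated over $1\le|t|\le T$, giving $\|r\|\del_\rho\cdot 2\ln T$. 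Plugging these into Lemma~\ref{BE lemma} yields exactly (\ref{GenB}). The only real subtlety — and the step I expect to need the most care in writing up — is this handling of the singularity at $t=0$ in the characteristic-function integral; the three-step Taylor/mixing comparison itself is routine once Theorem~\ref{Theorem1}'s argument is in hand.
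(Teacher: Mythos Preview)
Your approach is essentially the paper's: apply Lemma~\ref{BE lemma} with $X=S_{n,\nu}/b_n$ and $Y=S_{n,\mu}/b_n$, control the characteristic-function integral via the same three-step comparison (two mean-value estimates and one application of Assumption~\ref{Ass1}), then split the integration domain at $|t|=1$ so that the $\del_\rho$-term is only integrated against $1/|t|$ over $1\le|t|\le T$, producing $2\|r\|\del_\rho\ln T$. The paper records the contribution of the region $|t|\le 1$ as an extra $2\mu(|S_\rho|)/b_n$, which is then absorbed (using $T\ge1$) into $2T\int|S_\rho|(2+r)\,d\mu/b_n$; this is exactly why the coefficient in the statement is $2+r$ rather than $1+r$.

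There is, however, a genuine gap in your treatment of the region $|t|\le 1$. You assert that since $\varphi_X(0)=\varphi_Y(0)=1$, the quotient $(\varphi_X(t)-\varphi_Y(t))/t$ is ``controlled by the derivative, i.e.\ again by $b_n^{-1}\int|S_\rho|(2+r)\,d\mu$''. But
\[
\varphi_X'(t)-\varphi_Y'(t)=ib_n^{-1}\,\mu\big((r-1)\,S_n\,e^{it_nS_n}\big),
\]
which is governed by $b_n^{-1}\mu((r+1)|S_n|)$, not by anything involving only $S_\rho$. The same problem arises if you try the derivative of the middle term $B(t)=\mu\big((r-1)e^{it_n(S_n-S_\rho)}\big)$: one gets $b_n^{-1}\mu((r+1)|S_n-S_\rho|)$, again depending on $S_n$. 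So the Lipschitz/derivative argument you sketch does \emph{not} yield the bound you claim, and the step you correctly flagged as ``the only real subtlety'' is not resolved by what you wrote. You need a bound on the $|t|\le1$ piece that involves only $S_\rho$ (as in the paper's extra term $2\mu(|S_\rho|)/b_n$), not $S_n$.
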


\begin{proof}
Let $t\not=0$ and set $t_n=t/b_n$.
As in the proof of Theorem \ref{Theorem1} for every $\rho<n$ we have
\[
|\mu(re^{it_n S_n})-\mu(e^{it_n S_n})|\leq |t|b_n^{-1}I_1(\rho)+|\mu(re^{it_n(S_n-S_\rho)})-\mu(e^{it_n (S_n-S_\rho)})|
\]
where 
\[
I_1(\rho)=\int |S_\rho(x)|(1+r(x))d\mu(x).
\]
When $|t|\leq1/T$ we will not use the above, and instead we will use the estimate
\[
|\mu(re^{it_n S_n})-\mu(e^{it_n S_n})|\leq \left|\big(\mu(re^{it_n S_n})-1\big)-\big(\mu(e^{it_n S_n})-1\big)\right|\leq \mu\left((r+1)|S_n|\right)|t_n|.
\]
Therefore, with $Y=S_{n,\mu}/b_n$ and $X=S_{n,\nu}/b_n$,
for every $\rho,T\geq 1$,
\begin{eqnarray*}
\int_{-T}^T\left|\frac{\varphi_X(t)-\varphi_Y(t)}{t}\right|dt\leq \int_{-1/T}^{1/T}\left|\frac{\varphi_X(t)-\varphi_Y(t)}{t}\right|+2TI_1(\rho)/b_n+\\
\int_{1/T\leq |t|\leq T}\left|\frac{\mu(re^{it_n(S_n-S_\rho)})-\mu(e^{it_n (S_n-S_\rho)})}{t}\right|dt\leq 2\mu\left((r+1)|S_n|\right)(b_n T)^{-1}\\+2TI_1(\rho)/b_n+4\del_\rho\|r\|\ln T
\end{eqnarray*}
where in the last inequality we have used Assumption \ref{Ass1}. The theorem follows now from Lemma \ref{BE lemma} and the above estimate.
\end{proof}

We remark that 
\[
\mu\left((r+1)|S_n|\right)\leq \|r+1\|_2\|S_n\|_{L^2}
\]
and so when $S_n$ has zero $\mu$-mean and $\|r\|_{L^2}<\infty$ we get that the above expression is of order $\sig_n=\sqrt{\text{Var}(S_n)}$. Hence, the contribution of the last expression in the right hand side of (\ref{GenB}) is of order $1/T$ when $b_n\thickapprox\sig_n$, which is the case in most  applications we have in mind. When, in addition, $r$ is bounded, $g_j$ are  uniformly bounded and $\del_j\leq c_1e^{-c_2j}$ for some positive $c_1$ and $c_2$ then by taking $T\leq An$ (for some $A>0$) and $\rho=c\ln b_n$ for a sufficiently large $c$  we get 
\[
d_K(S_{\nu,n}/b_n,Z)\leq (4c+1)d_K(S_{\mu,n}/b_n,Z)+C(b_n^{-1}T\ln b_n+1/T).
\]
Taking $T=b_n^{\frac12}$ we get that 
\[
d_K(S_{\nu,n}/b_n,Z)\leq (4c+1)d_K(S_{\mu,n}/b_n,Z)+Cb_n^{-1/2}\ln b_n.
\]
Since $T$ and its reciprocal appear in the right hand side of (\ref{GenB}) we do not expect to get better rates than the above only  under Assumption \ref{Ass1} (of course, certain rates can be obtained when $\del_j$ diverges polynomially fast to $0$ and when $r$ and $g_j$ only satisfy  certain moment conditions).

\begin{remark}
When $S_{n,\nu}$ is not centered (but $S_{n,\nu}$ is), then it is desirable to get estimates on $d_K(b_n^{-1}\overline{S_{n,\nu}},Z)$, where $\bar{Y}=Y-\bbE[Y]$ for every random variable $Y$. Applying Lemma 3.3 in \cite{HK-BE} with $a=\infty$ yields that 
\[
d_K(b_n^{-1}\overline{S_{n,\nu}},Z)\leq 3d_K(b_n^{-1}\overline{S_{n,\mu}},Z)+(1+4\|f_Z\|_\infty)|\bbE[S_{n,\nu}]-\bbE[S_{n,\mu}]|/b_n.
\]
The first expression on the above right hand side was estimated in Theorem \ref{QuantEag}, while the second expression was estimated in Section \ref{SecCent}. 
Using the above Lemma 3.3 together with  Proposition \ref{Vprop}  we can also  get rates in  the CLT for $(S_{n,\nu}-\bbE[S_{n,\nu}])/b_{n,\nu}$ from given rates in the corresponding CLT for $(S_{n,\mu}-\bbE[S_{n,\mu}])/b_{n,\mu}$.
\end{remark}

\subsubsection{Optimal convergence rates}
Consider the case when $b_n=n^{-\frac12}$ (or $b_n\thickapprox n^{\frac12}$) and 
\[
d_K(S_{n,\mu}/b_n,Z)=\mathcal O(n^{-1/2})
\]
where $Z$ is a standard normal random variable . The rate $n^{-1/2}$ is optimal, while Theorem \ref{QuantEag} is not likely to yield optimal rates for $d_K(n^{-\frac12}S_{n,\nu},Z)$ even when $\del_j$ decays exponentially fast to $0$ as $j\to\infty$ and $r$ and $\sup_j|g_j|$ are bounded (in this case we have managed to obtain the rate $n^{-1/4}\ln n$).
In many situations (see \cite{DFGTV1}, \cite{DFGTV2}, \cite{HafSDS} and \cite{HK})
there exist normed spaces $B_n$ of functions on some  measurable spaces $\cE_n$, a family of operators $\cL^{(n)}_{z}:B\to B_n, z\in\bbC$ and a family of probability measures $\mu_n$ on $\cE_n$ so that for every $s\in B$, $n\geq1$ and $z\in\bbC$,
\[
\mu(s\cdot e^{z S_n})=\mu_n(\cL^{(n)}_{z}s).
\] 
Moreover, the norm on $B_n$ is larger than the $L^1(\mu_n)$-norm and there exists $\epsilon>0$, $C>0$ and a function $R:[0,\infty)\to\bbR$ so that $R(t^2)$ is integrable
and  for every $t\in[-\epsilon,\epsilon]$, $n\geq1$ and a function $s$ with $\int sd\mu=0$,
\[
\|\cL_{it}^{(n)}s\|\leq C\|s\||t|R(tn^2).
\]
In applications such estimates follow from analyticity (in $z$) assumptions on the operators $\cL_z^{(n)}$ together with a complex sequential Ruelle-Perron-Frobeneius theorem (see \cite[Theorem 3.3]{HafSDS} for example).
 When $r-1\in B$ then 
with $t_n=tn^{-1/2}$,
\[
|\mu(re^{it_n S_n})-\mu(e^{it_n S_n})|=|\mu_n(\cL_{it_n}^{(n)}(r-1))|\leq C_1|t|n^{-\frac12}R(t^2)
\]
where we have used that $\mu(r-1)=0$.
Taking $T\approx\del\sqrt n$ in Lemma \ref{BE lemma} we get that
\[
d_K(S_{\nu,n}/b_n,Z)\leq C(4c+1)d_K(S_{\mu,n}/b_n,Z)+C\left(\frac{2\|f_Z\|_\infty c^2}{\del}+C_1\int R(t^2)dt\right)n^{-1/2}
\]
and so the optimal rate of convergence is preserved in the above circumstances. 
When $|\bbE[S_{n,\nu}]-\bbE[S_{n,\mu}]|$ is bounded in $n$ (see Section \ref{SecCent}) we also obtain optimal convergence rates in the convergence of $(S_{n,\nu}-\bbE[S_{n,\nu}])/b_n$ from the corresponding optimal rate for $(S_{n,\mu}-\bbE[S_{n,\mu}])/b_n$. Using Proposition \ref{Vprop}, if $b_n=b_{n,\mu}$ then we can replace $b_n$ with $b_{n,\nu}$ in the CLT corresponding to $\nu$ and still get the optimal rate.

\section{Infinite dimensional results}

\subsection{The weak invariance principle (WIP)}
Let $g_j:\cX_j\to\bbR^d$, $j\geq0$ be  vector-valued functions and for every $t\geq 0$ and $n\geq1$ consider the function
\[
\cS_n(t)=\sum_{n=0}^{[nt]-1}g_j\circ \overline{X}_j=\sum_{n=0}^{[nt]-1}g_j(\overline{X}_j(x)).
\]
For every fixed $n$, we can view $\cS_n(t)$ as a continuous time process  by considering  $x$ as a random variable whose distribution is either $\mu$ or $\nu=rd\mu$, where $r$ is a density function. 
Let $\cS_{n,\mu}(t)$ and $\cS_{n,\nu}(t)$ be the resulting continuous time processes.

\begin{theorem}\label{Theorem}
Suppose that 
for some two conjugate exponents $p$ and $q$ we have that 
 $r$ lies in the $L^p(\mu)$-closure of $B\cap L^p(\mu)$ and $g_j\circ X_j\in L^q(\mu)$ for all $j\geq0$.
 Let $(b_n)$ be a sequence of positive numbers so that $\lim_{n\to\infty}b_n=\infty$. Then, under Assumption \ref{Ass1},
if the sequences of processes process $\cS_{n,\mu}(\cdot)/b_n$ converges in distribution in the Skorokhod space $D([0,\infty),\bbR^d)$ as $n\to\infty$, then the processes $\cS_{n,\nu}/b_n$ converges in distribution to the same limit. If  $r$ is positive ($\mu$-almost surely) then the convergence with respect to $\mu$ can be derived from the convergence with respect to $\nu$.
\end{theorem}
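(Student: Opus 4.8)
The plan is to reduce convergence in the Skorokhod space $D([0,\infty),\bbR^d)$ to convergence of finite-dimensional distributions together with tightness, and to handle each of these two ingredients separately. Tightness is in fact free: the laws of $\cS_{n,\nu}(\cdot)/b_n$ and $\cS_{n,\mu}(\cdot)/b_n$ on $D([0,\infty),\bbR^d)$ are mutually absolutely continuous for each fixed $n$ (indeed $\cS_{n,\nu}(\cdot)$ is just $\cS_{n,\mu}(\cdot)$ computed under the measure $d\nu=r\,d\mu$, and $r\in L^1(\mu)$, resp.\ $r$ lies in the $L^p$-closure of $B$). Since absolute continuity with respect to a tight sequence gives a tight sequence (by the standard argument: for a compact $K$ with $\mu(\cS_{n,\mu}/b_n\notin K)<\ve$, one controls $\nu(\cS_{n,\nu}/b_n\notin K)$ via uniform integrability of $r$), tightness of $\cS_{n,\mu}(\cdot)/b_n$ — which follows from its assumed convergence — transfers to $\cS_{n,\nu}(\cdot)/b_n$. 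So the whole problem is the convergence of finite-dimensional distributions.

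For the finite-dimensional distributions, fix $0\le t_1<t_2<\cdots<t_k$ and real vectors $\theta_1,\dots,\theta_k\in\bbR^d$, and by the Cramér--Wold device and the Levi continuity theorem it suffices to show
\[
\lim_{n\to\infty}\left|\mu\!\left(r\cdot e^{i b_n^{-1}\sum_{\ell=1}^k\langle\theta_\ell,\cS_n(t_\ell)\rangle}\right)-\mu\!\left(e^{i b_n^{-1}\sum_{\ell=1}^k\langle\theta_\ell,\cS_n(t_\ell)\rangle}\right)\right|=0.
\]
The random variable in the exponent is $b_n^{-1}$ times a finite linear combination $\Sigma_n:=\sum_{\ell=1}^k\langle\theta_\ell,\cS_n(t_\ell)\rangle$ of partial sums, and $\Sigma_n=\sum_{j=0}^{[nt_k]-1}h_{j,n}(\overline X_j)$ where $h_{j,n}$ is a fixed linear combination of the $g_j$'s with bounded coefficients depending only on which $t_\ell$'s satisfy $[nt_\ell]>j$. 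Now I run exactly the three-term argument from the proof of Theorem \ref{Theorem1}: first reduce to $r\in B$ by $L^p$-approximation if necessary; then choose $a_n\to\infty$ with $a_n<[nt_1]$ (assuming $t_1>0$; the $t_1=0$ term is the constant $0$ and can be dropped) and $c_{a_n}=o(b_n)$, where $c_m=\int|\Sigma^{(m)}|\,d\mu+\int r|\Sigma^{(m)}|\,d\mu$ with $\Sigma^{(m)}$ the analogue of $\Sigma_n$ truncated to the first $m$ summands; use the mean value theorem to replace $\Sigma_n$ by $\Sigma_n-\Sigma^{(a_n)}$ in both integrals at a cost $\le |t|\,b_n^{-1}c_{a_n}\to0$; and then, since $\Sigma_n-\Sigma^{(a_n)}$ is a bounded-coefficient function of $\overline X_{a_n}$ and $\mu(r)=1$, apply Assumption \ref{Ass1} to bound the middle difference by $\|r\|\delta_{a_n}\to0$. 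This proves the forward implication.

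For the converse direction, assuming $r>0$ $\mu$-a.s., the point is that $\mu$ is then absolutely continuous with respect to $\nu$ with density $1/r$, but $1/r$ need not lie in $B$, so one cannot simply invoke the theorem with the roles of $\mu,\nu$ swapped. Instead I would argue as in Eagleson's original reversal: fix a finite-dimensional functional and a bounded continuous test function $\psi$; the forward direction already shows that for every bounded $s\in B$ with $\mu(s)=1$ the limit of $\mu(s\cdot\psi(\cS_n/b_n))$ exists and equals the limit $\Lambda(\psi)$ of $\mu(\psi(\cS_n/b_n))$. For $s=r\cdot\mathbf 1_{\{r\ge\eta\}}/\mu(r\ge\eta)$ one needs $s$ (or an $L^p/L^1$-approximant of it) in $B$; since $B$ is $L^p$- or $L^1$-dense in the relevant class this is available. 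Letting $\eta\downarrow0$ and using $r>0$ a.s., dominated convergence gives $\nu(\psi(\cS_n/b_n))\to\Lambda(\psi)$ as well, hence $\cS_{n,\nu}(\cdot)/b_n$ converges in distribution in $D([0,\infty),\bbR^d)$ to the same limit, and then the forward direction (now applied to that common limit) shows $\cS_{n,\mu}(\cdot)/b_n$ converges.

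The main obstacle I expect is not the finite-dimensional computation — that is a routine reprise of Theorem \ref{Theorem1} with $\Sigma_n$ in place of $S_n$ — but the bookkeeping around tightness in $D([0,\infty),\bbR^d)$ and, especially, the reversal step: making the absolute-continuity-in-both-directions argument genuinely rigorous under only the weak membership/density hypotheses on $r$ (and on the approximants of $r\mathbf 1_{\{r\ge\eta\}}$) in the space $B$. One must be careful that the $L^p$-approximation errors are uniform in $n$, which they are because they are bounded by $\|r-s\|_{L^p(\mu)}$ independently of $n$, exactly as in the proof of Theorem \ref{Theorem1}.
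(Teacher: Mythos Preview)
Your treatment of the forward implication is correct and matches the paper's approach: transfer tightness via uniform integrability of the fixed density $r$, and for finite-dimensional distributions run the three-term decomposition of Theorem~\ref{Theorem1}. The paper works directly with the vector characteristic function of $V_n=(S_{[ns_1]},\dots,S_{[ns_m]})$ and subtracts $U_n=(S_{a_n},\dots,S_{a_n})$, whereas you first reduce via Cram\'er--Wold to a scalar linear combination; these are equivalent reformulations. Your bookkeeping around $\Sigma^{(m)}$ and $c_m$ is slightly awkward because $h_{j,n}$ formally depends on $n$, but since you take $a_n<[nt_1]$ the coefficients on $j<a_n$ are all equal to $\sum_\ell\theta_\ell$ and the issue disappears; the paper's choice of $c_k=\mu(|S_k|)+\mu(r|S_k|)$ avoids this entirely.

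Your argument for the converse direction, however, has a genuine gap and in fact runs in the wrong direction. You write that ``the forward direction already shows that for every bounded $s\in B$ with $\mu(s)=1$ the limit of $\mu(s\cdot\psi(\cS_n/b_n))$ exists and equals the limit $\Lambda(\psi)$ of $\mu(\psi(\cS_n/b_n))$'', but in the converse we are \emph{assuming} $\nu$-convergence and trying to \emph{prove} $\mu$-convergence, so neither $\Lambda(\psi)$ nor any such limit is known to exist a priori. Your subsequent truncation-and-dominated-convergence step then concludes $\nu(\psi(\cS_n/b_n))\to\Lambda(\psi)$, which is the hypothesis, not the goal. The correct and much simpler observation is this: the three-term computation you already carried out proves $\bigl|\mu(re^{it_nV_n})-\mu(e^{it_nV_n})\bigr|\to0$, i.e.\ $\bigl|\nu(e^{it_nV_n})-\mu(e^{it_nV_n})\bigr|\to0$, which is a symmetric statement---convergence of the finite-dimensional distributions under $\mu$ and under $\nu$ are \emph{equivalent} (the paper says this explicitly). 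So the finite-dimensional part of the converse needs no additional work. What remains for the converse is tightness of $\cS_{n,\mu}/b_n$ from tightness of $\cS_{n,\nu}/b_n$, and this is precisely where the hypothesis $r>0$ $\mu$-a.s.\ enters: it makes $\mu\ll\nu$ with density $1/r\in L^1(\nu)$ (since $\int (1/r)\,d\nu=\mu(\cE)=1$), so the same uniform-integrability argument you sketched for the forward direction applies with $1/r$ in place of $r$. You do not address tightness in the converse at all, and your Eagleson-style reversal with $s=r\mathbf 1_{\{r\ge\eta\}}/\mu(r\ge\eta)$ is both unnecessary and, as written, circular.
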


\begin{proof}
Suppose that $\cS_{n,\mu}(\cdot)/b_n$ converges in distribution.
Then $\cS_{n,\mu}(\cdot)/b_n$  is a tight family, namely for every $\ve>0$ there exists a compact set $K_\ve$ (of paths in the Skorokhod space) so that 
\[
\sup_n\mu\{\cS_{n,\mu}/b_n(\cdot)\not\in K_\ve\}<\ve.
\]
We claim that $\cS_{n,\nu}/b_n$ is also a tight family. 
 Indeed, for any $C>0$
set 
\[
\eta_C=\mu(|r|I(|r|>C)).
\]
Since $r\in L^1(\mu)$ we have $\lim_{C\to\infty}\eta_C=0$.
For every positive $\ve$ and $C$ we have
\begin{eqnarray*}
\nu\{\cS_n(\cdot)\not\in K_{\ve}\}=\mu(r I(\cS_n(\cdot)\not\in K_{\ve}))\\\leq 
C\mu(I(\cS_n(\cdot)\not\in K_{\ve}))+\mu(rI(|r|>C)\leq \eta_C+C\ve.
\end{eqnarray*}
Given $\ve'>0$ we first take $C$ large enough so that $\eta_C<\frac12\ve'$, and then, after fixing this 
$C$, we take $\ve$ so that $C\ve<\frac12\ve'$. Then set $K_{\ve}$ satisfies
\[
\sup_n\nu\{\cS_n(\cdot)\not\in K_{\ve}\}<\ve'.
\]
Tightness and the convergence of all the finite dimensional distributions, expect  from the ones which involve members of a certain set of $t$'s which depend only on the target limiting distribution, is equivalent to weak convergence in the Skorokhod space (see \cite[Theorem 15.1]{Bil}). Therefore, what is left  to prove in order to get the convergence in distribution of $\cS_{n,\nu}/b_n$ is that one can derive the convergence of a given finite dimensional distribution of $\cS_{n,\nu}/b_n$  from the convergence of the corresponding  finite dimensional distribution of $\cS_{n,\mu}/b_n$ (in fact, we will show that these two convergences are equivalent). 
Consider the sequence
\[
c_k=\int |S_k(x)|d\mu(x)+\int r(x) |S_k(x)|d\mu(x)
\]
and let $(a_n)$ be a sequence of positive integers so that $c_{a_n}=o(b_n)$ and $\lim_{n\to\infty}a_n=\infty$. It is clear that we can assume that $a_n=o(n)$.
Let $s_1,...,s_m\in(0,\infty)$ and  $t\in\bbR^{dm}$. For every $n\geq1$ we write $t_n=t/b_n$.
Set 
\[
V_n(x)=(S_{[ns_1]}(x),...,S_{[ns_m]}(x))
\]
and
\[
U_n(x)=(S_{a_n}(x),...,S_{a_n}(x)).
\]
We first assume that $r\in B$.
By the mean value theorem, for all sufficiently large $n$ we have
\[
\left|\mu(re^{it_n V_n})-\mu(re^{it_n (V_n-U_n)})\right|\leq C_m|t|b_{n}^{-1}\mu(r\cdot |S_{a_n}|)\leq C_m|t|c_{a_n}/b_{n}\to 0\text { as }n\to\infty
\]
where $C_m$ is some constant which depend only on $m$ (the number of $s_i$'s).
Relying on Assumption \ref{Ass1}, taking into account that $\mu(r)=1$ and that $V_{n}-U_{a_n}$ is a function of $\overline{X}_{a_n}$,
we have
\[
|\mu(r\cdot e^{it_n(V_n-U_n)})-
\mu(e^{it_n(V_n-U_{n})})|\leq \|r\|\del_{a_n}\to 0\text { as }n\to\infty.
\] 
Finally, by the mean value theorem we have
\[
\left|\mu(e^{it_n(V_n-U_n)})-\mu(e^{it_n V_n})\right|\leq C_m|t|b_{n}^{-1}\mu(|S_{a_n}|)\leq C_m|t|c_{a_n}/b_n\to 0\text { as }n\to\infty.
\]
We conclude that for every $t\in\bbR^{md}$, 
\[
\lim_{n\to\infty}|\mu(re^{it_nV_n})-\nu(e^{it_nV_n})|=0
\]
and the claim about the equivalence between the convergence of the finite dimensional distributions follows from the Levi continuity theorem.
The reduction to the case when $r$ is only in the $L^p(\mu)$-closure of $B$ relies on the inequalities
\[
|\mu(re^{it_n V_n})-\mu(s e^{it_n V_n})|\leq\|r-s\|_{L^p}
\]
and 
\[
\int s(x)|S_n(x)|d\mu(x)\leq \int r(x)|S_n(x)|d\mu(x)+\|s-r\|_{L^p}\|S_n\|_{L^q}<\infty
\]
where $p$ and $q$ come from the assumptions of the theorem. 
\end{proof}

\begin{remark}
 When $|\bbE[S_{n,\nu}]-\bbE[S_{n,\mu}]|=o(b_n)$ (see Section \ref{SecCent} for conditions insuring that) we also obtain the convergence of $(S_{[nt],\nu}-\bbE[S_{[nt],\nu}])/b_n$ from the convergence of 
 $(S_{[nt],\mu}-\bbE[S_{[nt],\mu}])/b_n$. 
\end{remark}

\begin{remark}\label{WIP}
Theorem \ref{Theorem} shows that the weak invariance principles which follow from the results in  \cite{DFGTV0} and \cite{DH} hold true also when starting from the Lebesgue measure on the underlying manifold, and not only from the equivariant random measure $\mu_\om$.
\end{remark}

\subsection{The iterated weak invariance principle}
In this section we will discuss a version of Eagleson's theorem for the iterated weak invariance principle. Since the latter is less known  than the usual WIP, we will first describe the context in which it is naturally arises, 

Let $d,e\in\bbN$, $a:\bbR^d\to\bbR^d$ be a function of class $C^{1+}$  and $b:\bbR^{d}\to\bbR^{d+e}$ be a function of class $C^{2+}$.
Consider the stochastic  differential equation (SDE)
\begin{equation}\label{ODE}
dX=\left(a(X)+\frac{1}2\sum_{\al,\be,\gamma}D^{\be \gamma}\partial^{\al}b^\be(X)b^{\al\gamma}(X)\right)dt+b(X)\circ dW,\,\,\,X(0)=\xi
\end{equation}
where $W$ is a standard $d$-dimensional Brownian motion and $\xi$ is a fixed vector in $\bbR^d$. 
Here we sum over $1\leq\al\leq d,\,1\leq\be,\gamma\leq e$, and $b^{\al \gamma}$
and $b^\be$ denote the $(\al,\gamma)$-th entry and $\beta$-th column, respectively, of $b$.
In \cite[Theorem 2.2]{KM}, for suspension flows $\{\phi_s:\,s\geq 0\}$ built over  non-uniformly expanding or hyperbolic dynamical systems $(X,T,\mu)$, it was shown that for any centered function $v:X\to\bbR^e$, the solution $X_n$ of the SDE
\[
dX_n=a(X_n)dt+b(X_n)dW_n,\,\,\,X_n(0)=\xi
\]
weakly converges in $C([0,\infty),\bbR^d)$ towards the solution of  \eqref{ODE} as $n\to\infty$, 
where 
$$W_n(t)=n^{-1/2}\int_0^t v\circ\phi_s ds\,\,\text{ and }\,\,dW_n=\dot{W_n}dt.$$ 
 Next, let $\bbW_n(t)$ be the $e\times e$-dimensional process whose entries are given by 
 $$\bbW_n^{\be,\gamma}(t)=\int_0^t W_n^\be dW_n^\gamma,\,\,\, 1\leq \be,\gamma\leq e.$$
A key ingredient in the proof of \cite[Theorem 2.2]{KM} is the iterated WIP \cite[Theorem 2.1]{KM} which states that $(W_n,\bbW_n)$ weakly converges as $n\to\infty$ towards a process with a certain  structure. The proof of the latter was based on a discretization argument, where Kelly and Melbourne showed that it is enough to prove the convergence of $(S_n,\bbS_n)$ towards the latter process,  where $S_n(t)=n^{-1/2}\sum_{j=0}^{[nt]-1}v\circ T^j$ and 
$$
\bbS_n^{\be,\gamma}(t)=\int_0^t S_n^\be dS_n^\gamma=n^{-1}\sum_{0\leq i<j\leq [nt]-1}v^\be\circ T^i\cdot v^\gamma\circ T^j,\,\,\,1\leq \be,\gamma\leq e.
$$
In fact, the iterated WIP needed is with respect to the measure $\nu=rd\mu$, where $r$ is the underlying roof function defining the suspension flow, and not with respect to $\mu$. In order to settle this the authors of \cite{KM} used a version of Eagleson's theorem from \cite{ZW} which applies, in particular, to the iterated WIP.

A natural question arising here is whether the smooth approximation still holds  for random suspension flows (see, for instance \cite{Kif LimThms} for the definition), namely if we replace $T$ with a random dynamical system (as describe in Example \ref{Eg1}) and the roof function $r$ with a random roof function. It seems to us that the main obstacle in such a generalization is proving a version of Eagleson's theorem for the iterated WIP in the  random dynamics setup. In what follows we will provide such results in the more general non-stationary setup of this paper. 

Let $(\cE,\cF,\mu)$ and $X_0,X_1,...$ be as specified in Section \ref{Sec1}. 
Let $d, m\in \mathbb N$. For each $1\leq i\leq d$, let $g_j^{(i)}:\cY_j \to\bbR^{d_i},\,j\geq0$
be a sequence of vector-valued functions, where $d_i\geq1$. Set
\[
S_n g^{(i)}=\sum_{j=0}^{n-1}g^{(i)}_j\circ\overline X_j.
\]
Moreover, for each $d+1\leq k\leq d+m$ let $f^{k,u}_j$ and $f^{k,v}_j$, $j\geq0$ be sequences of real-valued functions on $\cY_j$, where $1\leq u\leq q_{k}$ and $1\leq v\leq p_k$ and $p_k, q_k \in \mathbb N$. Consider the matrix $A_n f_k$ given by
\[
(A_n f_k)_{u,v}=\sum_{i=0}^{n-1}\sum_{j=i+1}^{n-1}f^{k,u}_{i}\circ \overline{X}_i\cdot f^{k,v}_j\circ\overline{X}_j=
\sum_{i=0}^{n-1}f^{k,u}_{i}\circ\overline{X}_i \cdot \big(S_nf^{k,v}-S_{i+1}f^{k,v}\big),
\]
We consider $A_n f_k$ as an $\bbR^{p_k\cdot q_k}$-valued  function. 
For each  $t\in \mathbb R$ consider the vector-valued function  $V_n(t)$ on $\cE$  given by $V_n(t)=(L_n(t), R_n(t))$, where
\[
L_n(t)=n^{-\frac12}(S_{[nt]} g^{(1)},...,S_{[nt]} g^{(d)})
\]
and 
\[
R_n(t)=n^{-1}(A_{[nt]} f_{d+1},...,A_{[nt]} f_{d+m}).
\]

Let $r$ be a function in the $L^{s_1}(\mu)$-closure of $B\cap L^{s_1}(\mu)$, for some $s_1\geq1$, where $B$ comes from Assumption \ref{Ass1}.
Assume also that $r\geq0$ and that $\int r \, d\mu=1$.
Let us introduce an additional (moment) assumption. 
\begin{assumption}\label{Assumption2}
There are $p_1\geq1$ so that $s_1\geq p_1^*=p_1/(p_1-1)$ and $\ve\in(0,1)$ such that for every $d+1\leq k\leq d+m$ and $1\leq v\leq q_k$
we have
\begin{equation}\label{tz}
\|S_n f^{k,v}\|_{L^{p_1}(\mu_\om)}\leq Cn^{1-\ve},
\end{equation}
where $C$ is some constant. Moreover,  
\[
f_{i}^{k,u}\circ\overline{X}_i\in L^{s_3}(\mu)
\]
for every $i\geq0$, $d+1\leq k\leq d+m$ and $1\leq u\leq p_k$, where $s_3$ is some real number (where $1^*:=\infty$) and 
\[
\frac{1}{s_3}=1-\frac 1{s_1}-\frac 1{p_1}.
\]
Here, we use the conventions $\frac 1{\infty}:=0$ and $\frac 1{0}:=\infty$.
\end{assumption}
The condition \eqref{tz} holds true with $\ve=1/2$ in the random dynamics setup for appropriate classes of random non-uniformly expanding or hyperbolic maps (or for Markov chains in random dynamical environments \cite[Ch. 6]{HK}).

Let $\nu$ be the probability measure on $\cE$ given by $d\nu=r d\mu$. 
Our main result here is the following theorem.
\begin{theorem}\label{Theorem2}
Under the Assumptions \ref{Ass1} and \ref{Assumption2}, if the continuous time process $V_n(\cdot)$ converges in distribution in the Skorokhod  space with respect to the measure $\mu$, then it also converges in distribution in the Skorokhod  space with respect to the measure $\nu$ (and to the same limit). Finally, if
$r>0$ ($\mu$-a.s.),  then the convergence with respect to $\mu$ can be derived from the convergence with respect to $\nu$.
\end{theorem}

\subsection*{Proof of Theorem \ref{Theorem2}}
First  if $V_n(\cdot)$ converges in distribution with respect to $\mu$, then $\{V_n\}$ is a tight family. Arguing exactly as in the proof of Theorem \ref{Theorem}, we obtain that it is also a tight family with respect to the measure $\nu$. Therefore, it remains to show that the finite dimensional distributions converge.

Let $t_1,...,t_{p}$ be positive real numbers and set
\[
Q_n=(V_n(t_1),V_n(t_2),...,V_n(t_{p})).
\]
We first need the following elementary result.
\begin{lemma}\label{Elem}
For any two sequences $(c_n)$ and $(q_n)$ of real numbers such that $\lim_{n\to\infty}q_n=\infty$, there exists a (weakly increasing) sequence $(b_n)$ of natural numbers which tends to $\infty$ as $n\to\infty$ so that for any other sequence $(a_n)$ of natural numbers which tends to $\infty$ and satisfies $a_n\leq b_n$ we have $c_{a_n}=o(q_n)$.
\end{lemma}
\begin{remark}\label{Rema}
We now observe that for two pairs of sequences $(c_n)$ and $(q_n)$ as in Lemma~\ref{Elem}, we can choose a sequence $(b_n)$ compatible with both of these pairs. More precisely, 
take  two pairs of sequences $(c_n), (q_n)$ and $(c'_n),(q'_n)$ such that $\lim_{n\to \infty} q_n=\lim_{n\to \infty} q_n'=\infty$.  Let $(b_n)$ be a sequence given by Lemma~\ref{Elem} for the pair $(c_n), (q_n)$. Furthermore, let $(b_n')$ be a sequence given by Lemma~\ref{Elem} for the pair $(c'_n),(q'_n)$. 
Set $b_n'':=\min \{b_n, b_n' \}$, $n\in \mathbb N$. Then, $(b_n'')$ is weakly increasing and for any sequence of natural numbers $(a_n)$ such that $a_n\le b_n''$ and $\lim_{n\to \infty}a_n=\infty$, we have that 
 $c_{a_n}=o(q_n)$ and $c'_{a_n}=o(q_n')$.
\end{remark}
\begin{corollary}\label{Cor}
Under Assumptions \ref{Ass1} and \ref{Assumption2}, there exists a sequence $(b_n)$ which tends to $\infty$ as $n\to\infty$ such  that for any other sequence $(a_n)$ of natural numbers such that $a_n\leq b_n$ for all $n$ and $\lim_{n\to \infty}a_n=\infty$, for any relevant  $i$, $k$, $u$ and $v$ we have  that 
\[
\lim_{n\to\infty}\del_{1,i}(n)=\lim_{n\to\infty}\del_{2,i}(n)=\lim_{n\to\infty}\del_{1,k,u,v}(n)=\lim_{n\to\infty}\del_{2,k,u,v}(n)=0
\]
where
\begin{eqnarray*}
\del_{1,i}(n)=n^{-1/2}\mu(|S_{a_n} g^{(i)}|),\,\,\del_{2,i}(n)=n^{-1/2}\mu(|r\cdot S_{a_n} g^{(i)}|),\\\del_{1,k,u,v}(n)=n^{-1}\mu\Big(\big|\sum_{i=0}^{a_n-1}f^{k,u}_{i}\circ \overline{X}_i\cdot \big(S_nf^{k,v}-S_{i+1}f^{k,v}\big)\big|\Big),\\\text{and }\,\,\,
\del_{2,k,u,v}(n)=n^{-1}\mu\Big(\big|r\cdot\sum_{i=0}^{a_n-1}f^{k,u}_{i}\circ \overline{X}_i\cdot \big(S_nf^{k,v}-S_{i+1}f^{k,v}\big)\big|\Big).
\end{eqnarray*}
\end{corollary}

\begin{proof}
First, by ~\eqref{tz} for all $0\leq i<n$  we have that we have,
\[
\|S_nf^{k,v}-S_i f^{k,v}\|_{L^{p_1}(\mu)}\leq 2Cn^{1-\ve}.
\]
Standard applications of the H\"older inequality yield that for $l=1,2$, 
\[
\del_{l,k,u,v}(n) \leq n^{-1}\sum_{i=0}^{a_n-1}\|f_{i}^{k,u}\circ\overline{X}_i\|_{L^{s_3}(\mu)}2C_lCn^{1-\ve} 
=2CC_ln^{-\ve}c_{k,u,a_n}
\]
where $C_1=1$, $C_2=\|r\|_{L^{s_1}(\mu)}$ and
\[
c_{k,u,n}=\sum_{i=0}^{n-1}\|f_{i}^{k,u}\circ\overline{X}_i\|_{L^{s_3}(\mu)}.
\]
Here $s_1$ and $s_3$ come from Assumption \ref{Assumption2}.
By applying Lemma \ref{Elem} with 
\[
c_n=\max_i\left(\mu(|S_{n} g^{(i)}|)+\mu(|r S_{n} g^{(i)}|)\right)\,\,\text{ and }\,\,q_n=n^{1/2} 
\]
and then with 
\[
c_n=\max_{k,u}c_{k,u,n},\,\text{ and }\,\,q_n=n^{\ve} 
\]
we complete the proof of the  corollary, taking into account Remark \ref{Rema}.
\end{proof}

In order to complete the proof of Theorem~\ref{Theorem2} we need the following result.

\begin{proposition}\label{Prop}
Suppose that Assumptions \ref{Ass1} and \ref{Assumption2} hold true.
Then the sequence of random vector-valued variables $Q_n$ converges in distribution with respect to $\mu$ if and only if it converges in distribution with respect to $\nu$ (and in the latter case the limiting distributions are equal). 
\end{proposition}

\begin{proof}
Let $D$ denote the dimension of the range of the (random) functions $Q_n$. By the Levi continuity theorem, in order to prove the proposition it is enough to show that for all $s\in\bbR^D$ we have
\begin{equation}\label{Target}
\lim_{n\to\infty}|\mu(r e^{isQ_n})-\mu(e^{isQ_n})|=0.
\end{equation}
Since we can approximate $r$ in $L^{s_1}(\mu)$ by non-negative functions $s\in B\cap L^{s_1}(\mu)$ satisfying $\mu(s)=1$ and since
\[
|\mu(r e^{isQ_n})-\mu(s e^{isQ_n})|\leq\|r-s\|_{L^{s_1}(\mu)},
\]
we conclude that  is enough to prove (\ref{Target}) in the case when $r\in B\cap L^{s_1}(\mu)$. Note that Assumption \ref{Assumption2} is left unchanged after such a reduction.

Let $(a_n)$ be a sequence such that the conclusion of Corollary \ref{Cor} holds. It is clear that we can assume without loss of generality that $a_n<n$.
For  $1\leq i\leq d$ and $1\leq \rho<n$, set
\[
G_{n,\rho} g^{(i)}=S_ng^{(i)}-S_{\rho} g^{(i)}
\]
which is a function of the variable $\overline{X}_{\rho}$.
Then  for all $1\leq\ell\leq p$ we have
 \[
S_{[n t_\ell]} g^{(i)}=S_{a_{[nt_\ell]}} g^{(i)}+G_{[nt_\ell],a_{[nt_\ell]}} g^{(i)}.
 \]
Next, for every $d+1\leq k\leq m$ and  $1\leq \rho <n$ set 
\[
(G_{n,\rho} f_k)_{u,v}=\sum_{i=\rho}^{n-1}f^{k,u}_{i}\circ \overline{X}_i\cdot \big(S_nf^{k,v}-S_{i+1}f^{k,v}\big).
\]
Then
$G_{n,\rho} f_k$ is a function of $\overline{X}_\rho$ and 
for every $1\leq\ell\leq p$ we have 
\[
A_{[nt_\ell]} f_k=\sum_{i=0}^{a_{[nt_\ell]}-1}f^{k,u}_{i}\circ\overline{X}_i \cdot\big(S_{[nt_\ell]}f^{k,v}-S_{i+1}f^{k,v}\big)
+G_{[nt_\ell],a_{[nt_\ell]}}f_k
\] 
Let  $\tilde Q_n$ be defined by similarly to $Q_n$ but with $G_{[nt_\ell],a_{[nt_\ell]}} g^{(i)}$ and $G_{[nt_\ell],a_{[nt_\ell]}} f_k$ instead of $S_{[n t_\ell]} g^{(i)}$
and $A_{[nt_\ell]} f_k$, respectively.
Using the above identities, by the mean value theorem we have
\begin{eqnarray*}
\left|\mu(r e^{is Q_n })-\mu(r e^{is\tilde Q_n})\right|\leq C|s|\max_{1\leq \ell\leq p}\Bigg(n^{-1/2}\sum_{i=1}^{d}\mu(\big|r\cdot S_{a_{[nt_\ell]}} g^{(i)}\big|)\\+n^{-1}\sum_{k=d+1}^{d+m}\sum_{u,v}\mu\Big(\Big|r\cdot \sum_{i=0}^{a_{[nt_\ell]}-1}f^{k,u}_{i}\circ \overline{X}_i\cdot \big(S_{[nt_\ell]}f^{k,v}-S_{i+1}f^{k,v}\big)\Big|\Big)\Bigg)
\to 0
\end{eqnarray*}
as $n\to \infty$, 
where $C$ is some constant which depends only on the dimension $D$. The same argument gives that 
\begin{eqnarray*}
\left|\mu(e^{is \tilde Q_n })-\mu(e^{is Q_n})\right|\leq C|s|\max_{1\leq\ell\leq p}\Bigg(n^{-1/2}\sum_{i=1}^{d}\mu\Big(|S_{a_{[nt_\ell]}} g^{(i)}|)\\+n^{-1}\sum_{k=d+1}^{d+m}\sum_{u,v}\mu\Big(\Big|\sum_{i=0}^{a_{[nt_\ell]}-1}f^{k,u}_{i}\circ \overline{X}_i \cdot \big(S_{[nt_\ell]}f^{k,v}-S_{i+1}f^{k,v}\big)\Big|\Big)\Bigg)
\to 0
\end{eqnarray*}
as $n\to \infty$. 
Finally, using Assumption \ref{Ass1}, taking into account that $\mu(r)=1$ 
and that the function $H=e^{is\tilde Q_n}$ is a bounded function of the variable $\overline{X}_{a_{p_n}}$, where $p_n=p_n(t)=\min\{[nt_1],...,[nt_p]\}$,
we have
\[
|\mu(r e^{is \tilde Q_n})-
\mu(e^{is \tilde Q_n})|= O(\del_{a_{p_n}}),
\] 
and thus, as $\lim_{n\to\infty}a_{p_n}=\infty$,
\[
|\mu(e^{is \tilde Q_n}r)-
\mu(e^{is \tilde Q_n})| \to 0, 
\]
when $n\to \infty$.  It remains to observe that~\eqref{Target} follows from the last three assertions. 
\end{proof}


\begin{acknowledgement*}
I would like to thank Davor  Dragi\v cevi\' c for suggesting me to write this paper and for many useful conversations. I would also like to thank Paul Doukhan for several references to examples of non-stationary and mixing stochastic processes. 
\end{acknowledgement*}

\end{document}